\newtheorem{theorem}{Theorem}[section]
\newtheorem{corollary}[theorem]{Corollary}
\newtheorem{remark}[theorem]{Remark}
\newtheorem{lemma}[theorem]{Lemma}
\newcommand{\lp}{\left(}
\newcommand{\rp}{\right)}
\newcommand{\la}{\left\{}
\newcommand{\ra}{\right\}}
\newcommand{\oa}{o_{G[A]}}
\DeclareMathOperator{\core}{\rm Core}
\DeclareMathOperator{\es}{{\rm es}_\Delta}
\newcommand{\san}[1]{{\color{black} #1}}
\begin{document}

\title{On the $\Delta$-edge stability  number of graphs}

\author{Saieed Akbari$^{a}$\thanks{Email: \texttt{s\textunderscore akbari@sharif.edu}}\and Reza Hosseini Dolatabadi$^{b}$ \thanks{Email: \texttt{nimadolat80@gmail.com}}\and Mohsen Jamaali$^{a}$\thanks{Email: \texttt{mohsen\textunderscore djamali@yahoo.com}} \and Sandi Klav\v{z}ar $^{e,f,g}$ \thanks{Email: \texttt{sandi.klavzar@fmf.uni-lj.si}}   \and Nazanin Movarraei$^{c}$\thanks{Email: \texttt{nazanin.movarraei@gmail.com}}\smallskip }

\date{}

\maketitle

\begin{center}
$^a$ Department of Mathematical Sciences, Sharif University of Technology\\
\medskip

$^b$ Department of Computer Engineering, Sharif University of Technology\\
\medskip

$^c$ School of Mathematics, Institute for Research in Fundamental Sciences\\ 
\medskip

$^e$
Faculty of Mathematics and Physics, University of Ljubljana, Slovenia\\
\medskip
$^f$
Institute of Mathematics, Physics and Mechanics, Ljubljana, Slovenia\\
\medskip
$^g$
Faculty of Natural Sciences and Mathematics, University of Maribor, Slovenia\\
\end{center}

\medskip

\begin{abstract}
The $\Delta$-edge stability number ${\rm es}_{\Delta}(G)$ of a graph $G$ is the minimum number of edges of $G$ whose removal results in a subgraph $H$ with $\Delta(H) = \Delta(G)-1$. Sets whose removal results in a subgraph with smaller maximum degree are called mitigating sets. It is proved that there always exists a mitigating set which induces a disjoint union of paths of order $2$ or $3$. Minimum mitigating sets which induce matchings are characterized. It is proved that to obtain an upper bound of the form ${\rm es}_{\Delta}(G) \leq c |V(G)|$ for an arbitrary graph $G$ of given maximum degree $\Delta$, where $c$ is a given constant, it suffices to prove the bound for $\Delta$-regular graphs. Sharp upper bounds of this form are derived for regular graphs. It is proved that if $\Delta(G) \geq\frac{|V(G)|-2}{3}$ or the induced subgraph on maximum degree vertices has a $\Delta(G)$-edge coloring, then ${\rm es}_{\Delta}(G) \le \san{\lceil |V(G)|/2\rceil}$.
\end{abstract}

\noindent
{\bf Keywords:} vertex degree; $\Delta$-edge stability number; matching; edge coloring  \\

\noindent
AMS Subj.\ Class.\ (2020): 05C75, 05C07, 05C70, 05C15

\newpage
\section{Introduction}

Let $G$ be a \san{non-empty} graph and let $\Delta(G)$ denote its maximum degree. The {\em {$\Delta$-edge stability number}}, $\es(G)$, of $G$, is the minimum number of edges of $G$ \san{whose removal results} in a subgraph $H$ with $\Delta(H) = \Delta(G)-1$. This graph invariant has been for the first time investigated by Borg and Fenech in~\cite{borg-2019}. The vertex version of the problem, that is, the $\Delta$-vertex stability number, has been studied in~\cite{borg-2022, borg-2017}. Furthermore, over the last few years, the corresponding problems for the chromatic number and the chromatic index were respectively investigated in~\cite{akbari-2020, bresar-2020} and~\cite{akbari-2022, akbari-2023, kemnitz-2018, knor-2022, lei-2023}. 

The above discussion naturally \san{falls under} the following broader framework recently proposed in~\cite{kemnitz-2022} and further elaborated in~\cite{alikhani-2023, kemnitz-2024}. For an arbitrary graph invariant $\tau$, the $\tau$-vertex stability number (the $\tau$-edge stability number) is the minimum number of vertices (edges) whose removal results in a subgraph $H$ with $\tau(H) \ne \tau(G)$. The corresponding minimum number of vertices and edges are respectively denoted by ${\rm vs}_{\tau}(G)$ and ${\rm es}_{\tau}(G)$. Following this general \san{framework, in this paper we use} the notation $\es(G)$, although we should add that the $\Delta$-edge stability number and the $\Delta$-vertex stability number of a graph $G$ were \san{also} denoted by $\lambda_{\rm e}(G)$ and $\lambda(G)$. 

In the seminal paper~\san{\cite{borg-2019},} the focus was on the upper bounds of the $\Delta$-edge stability number in terms of the size of the graph, the maximum degree, and the number of vertices of maximum degree. In this paper, we continue with the exploration of the $\Delta$-edge stability number. Our main goal is to find tight bounds for $\es(G)$ based on the order of the graph. 

The paper is organized as follows. In the rest of the introduction, we briefly define notations used in the paper and recall a result to be used later on. In the subsequent section, we prove several general properties for sets of edges whose removal decreases the maximum degree. In particular, we prove that there always exists such a set which induces a disjoint union of paths of order $2$ or $3$, and characterize smallest such sets which are matchings. In Section~\san{\ref{sec:regularization},} we prove that to obtain an upper bound of the form $\es(G) \leq c |V(G)|$ for an arbitrary graph $G$ of given maximum degree $\Delta$, where $c$ is a given constant, it \san{suffices} to prove the bound for $\Delta$-regular graphs. \san{This result is an analogue of~\cite[Theorem~3]{borg-2022} the $\Delta$-vertex stability version.} In Section~\san{\ref{sec:regular},} sharp upper \san{bounds} for regular graphs are derived. In the final section, we prove that the $\Delta$-edge stability number is bounded from the above by one-half of the order for each graph in which vertices of maximum degree induce a Class 1 graph, as well as for graphs $G$ with $\Delta(G) \geq\frac{|V(G)|-2}{3}$. 

Throughout this \san{paper,} all graphs are finite and simple, that is, with no loops and multiple edges, and moreover, with at least one edge. Let $G=(V(G), E(G))$ be a graph. The degree of a vertex $u$ of $G$ is denoted by $d_G(u)$. Further, $\delta(G)$ is the minimum degree of $G$. The subgraph of $G$ induced by a set $A$ of vertices and/or edges will be denoted by $G \left[ A \right]$. The number of edges between two disjoint sets of vertices $S,T\subseteq V(G)$ is denoted by $e(S,T)$. The open neighborhood $N_G(v)$ of a vertex $v\in V(G)$ is the set of neighbors of $v$, the closed neighborhood of $v$ is $N_G[v] = N_G(v)\cup \{v\}$. If $S\subseteq V(G)$, then the open and the closed neighborhood of $S$ are the respective sets $N_G(S) = \cup_{v\in S} N_G(v)$ and $N_G[S] = \cup_{v\in S} N_G[v]$. If the graph $G$ is clear from the context, we may omit the subscript $G$ in the above notation. The {\em independence number} of $G$ is denoted by $\alpha(G)$ and its matching number by $\alpha'(G)$. The {\em odd girth} of $G$ is the length of a shortest odd cycle in $G$ and is denoted by ${\rm og}(G)$. 

The {\em core} of $G$ is the set of vertices of $G$ of maximum degree and is denoted by {\em $\core(G)$}. Clearly, if $G$ is regular, then $\core(G) = V(G)$. If $S\subseteq E(G)$ is such that \san{$\Delta(G-S) \le \Delta(G) - 1$}, then we say that $S$ is a {\em mitigating set} of $G$. In that case, we also say that $G[S]$ is a {\em mitigating subgraph} of $G$. 
 
Given a graph $G$, a function $c:E(G) \to \{c_1,\ldots , c_k\}$ with $c(e) \neq c(f)$ for any two adjacent edges $e$ and $f$ is a  {\emph{proper $k$-edge coloring}} of $G$. The minimum $k$ for which $G$ admits a proper $k$-edge coloring is the {\emph{chromatic index}} of $G$, and denoted by $\chi'(G)$.
We let $[k]=\{1,\ldots,k\}$.
For any $i \in [\chi'(G)]$, let $C_i$ denote the set of all edges of $G$ that are colored by $c_i$ in the proper edge coloring $c$. 
For any $v \in V(G)$, let $c(v)$ denote the set of colors appearing in $v$.
Vizing's Theorem~\cite{vizing-1964} states that the chromatic index of an arbitrary simple graph \san{is  $\Delta(G)$ or $\Delta(G)+1$}. Graphs with $\chi'(G)=\Delta(G)$ are said to be {\em Class $1$}, while graphs with $\chi'(G)=\Delta(G)+1$ are said to be {\em Class $2$}.

Throughout the \san{following,} we will use Tutte's Theorem\san{~\cite{tutte-1947}}, which states that a graph has a perfect matching if and only if $o(G-S) \le |S|$ for every $S\subseteq V(G)$, where $o(H)$ denotes the number of odd components of a graph $H$. We conclude the preliminaries by recalling the following result to be used later on. 

\begin{theorem} {\rm \cite[Theorem 2.8]{borg-2019}} 
\label{thm:exact es-Delta}
If $G$ is a graph, then $\es(G) = |\core(G)| - \alpha'(G[\core(G)])$.
\end{theorem}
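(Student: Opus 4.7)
The plan is to exploit the equivalence between mitigating sets and ``core covers'': a set $S\subseteq E(G)$ is a mitigating set if and only if every vertex of $\core(G)$ is incident to at least one edge of $S$. Indeed, a non-core vertex $v$ already satisfies $d_G(v)\le \Delta(G)-1$ and need not lose any incident edge, while every core vertex must lose at least one incident edge if the maximum degree is to drop. Thus the theorem reduces to computing the minimum size of an edge subset of $E(G)$ that covers $C:=\core(G)$.

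For the upper bound, I would start from a maximum matching $M$ of $G[C]$, which has size $\alpha'(G[C])$ and covers $2\alpha'(G[C])$ vertices of $C$. For each remaining vertex $v\in C$ I pick an arbitrary edge $e_v\in E(G)$ incident to $v$; such an edge exists because $d_G(v)=\Delta(G)\ge 1$. The union $S := M\cup\{e_v : v\in C\setminus V(M)\}$ is then a mitigating set of size $\alpha'(G[C])+(|C|-2\alpha'(G[C])) = |C|-\alpha'(G[C])$.

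For the lower bound, let $S$ be any mitigating set. Discarding edges with no endpoint in $C$ (which contribute nothing to covering $C$) only decreases $|S|$, so I may assume every edge of $S$ meets $C$. Partition $S$ into $B := S\cap E(G[C])$ and $W := S\setminus B$, the edges with exactly one endpoint in $C$. Each vertex of $C\setminus V(B)$ must be covered by an edge of $W$, and since each edge in $W$ has a unique endpoint in $C$, distinct such vertices require distinct edges; hence $|W|\ge |C|-|V(B)|$. Now I apply Gallai's identity $\alpha'(H)+\beta'(H)=|V(H)|$ (where $\beta'$ is the edge cover number) to $H=(V(B),B)$, which has no isolated vertices by construction: since $\beta'(H)\le|B|$ (the edges of $B$ themselves form an edge cover of $V(B)$) and $\alpha'(H)\le\alpha'(G[C])$ (as $H$ is a subgraph of $G[C]$), I obtain $|V(B)|\le|B|+\alpha'(G[C])$. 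Substituting, $|S|=|B|+|W|\ge |B|+|C|-|V(B)|\ge |C|-\alpha'(G[C])$, matching the upper bound.

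The main obstacle is the lower bound: the naive counting inequality $2|B|+|W|\ge|C|$ (each edge covering at most two core vertices) only yields $|S|\ge|C|/2$, which is far too weak. The sharper bound requires Gallai's identity to translate the gap between ``edges used in $G[C]$'' and ``core vertices they actually cover'' into the matching number of $G[C]$. The rest is the straightforward matching-plus-stars construction on the core.
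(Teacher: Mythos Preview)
The paper does not actually prove this theorem: it is quoted from \cite{borg-2019} as a preliminary result and used without proof, so there is no in-paper argument to compare against. Your proof, however, is correct. The reduction to ``$S$ is mitigating iff every vertex of $C=\core(G)$ is incident with an edge of $S$'' is exactly right, the upper bound via a maximum matching of $G[C]$ plus one extra edge per unsaturated core vertex is the standard construction (note that the unsaturated core vertices are independent in $G[C]$, so the added edges are pairwise distinct and disjoint from $M$, which you implicitly use), and your lower bound via Gallai's identity on the subgraph $H=(V(B),B)$ is a clean way to convert ``$B$ covers $V(B)$'' into the inequality $|V(B)|\le |B|+\alpha'(G[C])$. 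One minor remark: after discarding edges disjoint from $C$ you are really proving the lower bound for a subset of the original mitigating set, which suffices since you only need $|S|\ge |C|-\alpha'(G[C])$.
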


\section{Properties of  mitigating sets}
\label{sec:Hall}

In this section, we first prove that one can always find a mitigating subgraph of $G$ whose each component is $P_2$ or $P_3$, and give an upper bound on the $\Delta$-edge stability number involving the independence number. To derive both results, Hall's Theorem will be applied. Afterwards, we characterize in two ways minimum mitigating sets which are matchings.

\begin{theorem}
\label{thm:components}
If $G$ is a graph of order $n$, then the following properties hold. 
\begin{enumerate}
\item[(i)] There exists a mitigating subgraph of $G$ whose each component is $P_2$ or $P_3$.
\item[(ii)] $\es(G)\leq n-\alpha(G)$, and the bound is sharp.
\end{enumerate}
\end{theorem}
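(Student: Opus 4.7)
My plan is to apply Hall's theorem twice---once for (i) and once for (ii)---with the maximality of a matching providing the key structural constraint in (i).

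For (i), let $M$ be a maximum matching of $G[\core(G)]$ and put $U = \core(G) \setminus V(M)$. Any edge of $G$ between two vertices of $U$ would extend $M$, so $U$ is independent in $G$. In the bipartite graph with parts $U$ and $V(G) \setminus U$ whose edges are the $G$-edges between these two sides, every $u \in U$ has all $\Delta(G)$ of its neighbors on the other side, while every vertex of $V(G) \setminus U$ has $G$-degree at most $\Delta(G)$. A double count of the edges from $A \subseteq U$ to $N(A)$ yields $|N(A)| \geq |A|$, so Hall's theorem supplies a matching $M'$ saturating $U$. I propose to take $S = M \cup M'$. Since $\Delta(G[S]) \leq 2$ and the edges of $M$ and $M'$ must alternate along any walk of $G[S]$, the components of $G[S]$ are paths; a path of three edges would necessarily take the form $u_1 v_1 v_2 u_2$ with $u_1, u_2 \in U$, $u_i v_i \in M'$, and $v_1 v_2 \in M$, but then $(M \setminus \{v_1 v_2\}) \cup \{u_1 v_1, u_2 v_2\}$ would be a matching of $G[\core(G)]$ of size $|M|+1$, contradicting maximality. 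Hence every component of $G[S]$ has at most two edges.

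For (ii), let $I$ be a maximum independent set of $G$ and $T = V(G) \setminus I$, with $|T| = n - \alpha(G)$. In the bipartite graph on parts $\core(G) \cap I$ and $T$ (edges the $G$-edges between them), every $c \in \core(G) \cap I$ has all $\Delta(G)$ neighbors in $T$ because $I$ is independent, while every $t \in T$ has at most $\Delta(G)$ neighbors in $G$. The same double count gives Hall's condition and yields an injection $\psi : \core(G) \cap I \to T$ with $c\psi(c) \in E(G)$. I then allocate one incident edge $e_t$ to each $t \in T$: if $t = \psi(c)$ for some $c \in \core(G) \cap I$, set $e_t = \{t, c\}$; otherwise let $e_t$ be any edge at $t$ (or omit $t$ if it is isolated, which forces $t \notin \core(G)$). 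The set $S = \{e_t : t \in T\}$ has at most $|T| = n - \alpha(G)$ edges; vertices of $\core(G) \cap I$ are covered via $\psi$ and vertices of $\core(G) \cap T$ are covered by their own edge, so $S$ is a mitigating set of the required size. For sharpness I would cite the even cycle $C_{2k}$, for which $\alpha(C_{2k}) = \es(C_{2k}) = k$, or the complete bipartite graph $K_{n,n}$, for which $\alpha(K_{n,n}) = \es(K_{n,n}) = n$.

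The main obstacle I anticipate is the no-$P_4$ step of (i), since it is the one point where Hall's theorem alone does not suffice and one must invoke the augmenting argument furnished by the maximality of $M$. Everything else is a routine application of Hall's theorem together with the double count that underlies the verification of its hypothesis in both parts.
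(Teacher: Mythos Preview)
Your proof is correct and follows essentially the same route as the paper: Hall's theorem applied to the unsaturated core vertices in (i) and to the core vertices inside a maximum independent set in (ii), with the maximality of $M$ supplying the augmenting argument that excludes $P_4$-components (the paper phrases this as ``no pair of edges of $M'$ meet an edge of $M$''). The only difference worth noting is that the paper additionally invokes the formula $\es(G)=|\core(G)|-\alpha'(G[\core(G)])$ to observe that $M\cup M'$ is in fact a \emph{minimum} mitigating set, a conclusion not required by the statement but obtained for free.
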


\begin{proof}
(i) Let $M$ be a maximum matching of $G[\core(G)]$, and let $A \subseteq \core(G)$ be the set of vertices of $\core(G)$ which are not saturated by $M$. As $M$ is a maximum matching of $G[\core(G)]$, the set $A$ is independent. Furthermore, since the vertices in $A$ are of maximum degree, $|N(A')| \ge |A'|$ holds for each $A'\subseteq A$. Therefore, by Hall's Theorem, there exists a matching $M'$ in $G$ that saturates $A$. By applying Theorem~\ref{thm:exact es-Delta} we have
\begin{align*}
\es(G) & \le  |M\cup M'| \le |M| + |M'| \\
& = \alpha'(G[\core(G)]) + (|\core(G)| - \san{2\alpha'(G[\core(G)]})) \\
& = |\core(G)| - \alpha'(G[\core(G)]) \\
& = \es(G)\,,
\end{align*}
hence equality must hold in the first line. Therefore, $M \cup M'$ is a mitigating set of $G$. 

Since $M$ is a maximum matching of $G[\core(G)]$, there is no pair of edges of $M'$ which meet an edge of $M$. Thus, any component of $M \cup M'$ is $P_2$ or $P_3$.

(ii) Let $I \subseteq V \lp G \rp$ be an independent set of size $\alpha(G)$, and $A = \core(G) \cap I$. By Hall's Theorem, there exists a matching $M$ in $G$ that saturates $A$.
    Assume $B \subseteq V(G) - I$ is the set of vertices \san{in} $V(G) - A$ which are saturated by $M$. Let $S$ be the set of edges containing $M$ as well as one edge adjacent to each vertex \san{in} $V(G) - (I \cup B)$. Clearly, $G - S$ has no vertex of degree $\Delta$. Since 
$$|S| = |M| + (n - |I \cup B|) = |M| + n - (\alpha(G) + |M|) = n - \alpha(G)\,,$$ 
the bound is proved. To see that it is sharp, consider an arbitrary regular, bipartite graph $G$ with a perfect matching. Then $\es(G) = \alpha(G) = n/2$. 
\end{proof}

To characterize minimum mitigating sets, we need a lemma which can be deduced from Tutte's Theorem.

\begin{lemma} \label{lem: tutte lemma}
    Let $G$ be a graph and $A \subseteq V(G)$. If for all $S \subseteq V(G)$ we have
    \[
        \oa(G - S) \leq |S|,
    \]
    then there exists a matching in $G$ that saturates $A$, where $\oa(G - S)$ is the number of odd components of $G - S$ which are contained \san{in $G[A]$}. 
\end{lemma}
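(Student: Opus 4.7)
The plan is to reduce the problem to Tutte's theorem via an auxiliary graph. Let $B = V(G) \setminus A$ and construct $G^*$ from $G$ by adding all missing edges inside $B$ (making $B$ a clique in $G^*$); if $|V(G)|$ is odd, additionally append a new vertex $v^*$ adjacent to every vertex of $B$. The degenerate case where $|V(G)|$ is odd and $B = \emptyset$ is vacuous: then $A = V(G)$ has odd order, so the $S = \emptyset$ instance of the hypothesis already fails, since $\oa(G) \geq 1 > 0$. In the remaining cases $|V(G^*)|$ is even. The key observation about this construction is that any perfect matching $M^*$ of $G^*$ restricts to a matching of $G$ saturating $A$: the edges of $M^*$ incident with $A$ must be original edges of $G$, because every added edge lies inside $B \cup \{v^*\}$. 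It therefore suffices to verify Tutte's condition for $G^*$.

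Take an arbitrary $T \subseteq V(G^*)$ and write $T_A = T \cap A$, $T_B = T \cap B$. The added clique on $B$ forces every vertex of $B \setminus T_B$, together with $v^*$ (if present and not in $T$) and every $A$-vertex reachable from them via edges of $G$, into a single ``big'' component of $G^* - T$. All remaining components lie entirely inside $A \setminus T_A$ and have no $G$-neighbour in $B \setminus T_B$; these are precisely the components of $G - S$ contained in $A$, where $S = T \cap V(G)$ if $B \setminus T_B \neq \emptyset$ and $S = T_A \cup B$ otherwise. In either case $|S| \leq |T|$, so the hypothesis yields $\oa(G - S) \leq |S| \leq |T|$, and consequently $o(G^* - T) \leq |T| + 1$, with the ``$+1$'' accounting for the possible big component.

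A parity argument then closes the proof: since $|V(G^*)|$ is even, $o(G^* - T)$ and $|T|$ share the same parity, and the bound $o(G^* - T) \leq |T| + 1$ sharpens to the Tutte inequality $o(G^* - T) \leq |T|$. Applying Tutte's theorem to $G^*$ yields a perfect matching of $G^*$ and hence a matching of $G$ saturating $A$. The main obstacle I expect is the case-by-case bookkeeping when identifying the small components of $G^* - T$ and choosing $S$ so that $|S| \leq |T|$ across the subcases ($B \setminus T_B$ empty or not, $v^*$ in $T$ or not). The conceptual heart, however, is clean: the clique on $B$ collapses every $B$-containing component of $G^* - T$ into a single one, turning Tutte's condition on $G^*$ into the given hypothesis modulo one extra component, which parity then absorbs.
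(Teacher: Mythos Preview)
Your proof is correct and follows essentially the same approach as the paper: reduce to Tutte's theorem via an auxiliary even-order graph in which the non-$A$ side is collapsed into a clique, then use parity to absorb the single extra (``big'') component. The paper's auxiliary graph differs only cosmetically---it adjoins an external copy of $K_n$ to $B$ (so the order $2n$ is automatically even) rather than turning $B$ itself into a clique and handling parity with an extra vertex---but the mechanism is identical.
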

\begin{proof}
    Let $n = |V(G)|$ and $B = V(G) - A$. Let $H$ be the graph obtained by the disjoint union of $K_n$ and $G$ and joining each vertex of $K_n$ to \san{each} vertex of $B$. Clearly, $H$ is of order $2n$. Moreover, as the order of $H$ is even, we see that $H$ has a perfect matching if and only if $G$ has a matching that saturates $A$. Note that maybe some edges in the matching have one endpoint in $A$ and another in $B$. Assume for the contrary that no matching in $G$ covers $A$. Therefore, by Tutte's \san{Theorem,} there exists $S \subseteq V(H)$ such that \san{$o(H-S) > |S|$. Since $|V(H)|$ is even, $|S|$ and $|o(H-S)|$ have the same parity. Therefore, the following inequality holds}
    \begin{equation} \label{eq: lemma ineq}
                o(H - S) \geq |S| + 2\,.
    \end{equation}
If $V(K_n) \subseteq S$, then $|S| \geq \frac{|V(H)|}{2}$ and thus, (\ref{eq: lemma ineq}) does not hold. Hence there exists $v \in V(K_n) - S$. Obviously, there are at least $|S| + 1$ odd components of $H - S$ that are contained \san{in $G[A]$}, which is a contradiction, and the lemma is proved.
\end{proof}

We now characterize minimum mitigating sets which are matchings as follows.  

\begin{theorem}
\label{thm:minimum-mitigating}
If $G$ is a graph, then the following statements are equivalent.
    \begin{enumerate}
        \item[(1)] $G$ has a matching that saturates $\core(G)$. 
        \item[(2)] $G$ has a minimum mitigating set which is a matching.         \item[(3)] For every $S \subseteq \core(G)$, $\es(G[N[S]]) \leq \frac{|V(G[N[S]])|}{2}$.
    \end{enumerate}
\end{theorem}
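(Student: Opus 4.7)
The plan is to prove the cycle $(2)\Rightarrow (1)\Rightarrow (2)$ together with $(1)\Leftrightarrow (3)$. The implication $(2)\Rightarrow (1)$ is immediate: every mitigating set must meet each vertex of $\core(G)$, so a mitigating matching automatically saturates $\core(G)$.

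For $(1)\Rightarrow (2)$, I would take a matching $M$ of $G$ saturating $\core(G)$ that maximises $|M\cap E(G[\core(G)])|$, and show by a standard augmenting-path exchange that this maximum equals $\alpha'(G[\core(G)])$. Indeed, if a maximum matching of $G[\core(G)]$ had strictly more internal edges, the symmetric difference would contain an augmenting path $P$ for $M\cap E(G[\core(G)])$ in $G[\core(G)]$, whose endpoints are forced to be saturated externally by $M$; swapping along $P$ and dropping the two external $M$-edges at its endpoints produces a matching still saturating $\core(G)$ with one more internal edge, a contradiction. Once this maximum is attained, $|M|=|\core(G)|-\alpha'(G[\core(G)])=\es(G)$ by Theorem~\ref{thm:exact es-Delta}, so $M$ is a minimum mitigating matching.

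For $(1)\Rightarrow (3)$, given a matching $M$ saturating $\core(G)$ and any $S\subseteq\core(G)$, the key observation is that each $v\in\core(G[N[S]])$ satisfies $N_G(v)\subseteq N[S]$ (automatic when $v\in S$; when $v\in N(S)\setminus S$ it is exactly the condition for $v$ to achieve degree $\Delta(G)$ inside $G[N[S]]$). Hence the $M$-partner of each such $v$ still lies in $N[S]$, so the edges of $M$ that live inside $G[N[S]]$ form a matching of $G[N[S]]$ saturating $\core(G[N[S]])$; in other words, $G[N[S]]$ itself satisfies (1). Applying $(1)\Rightarrow (2)$ to $G[N[S]]$ then produces a mitigating matching of size $\es(G[N[S]])$, and since any matching has at most $|V(G[N[S]])|/2$ edges, (3) follows.

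The hard part is $(3)\Rightarrow (1)$, which I would attack by contrapositive. If no matching of $G$ saturates $\core(G)$, Lemma~\ref{lem: tutte lemma} with $A=\core(G)$ supplies $T\subseteq V(G)$ and odd components $C_1,\ldots,C_k$ of $G-T$ lying in $G[\core(G)]$ with $k\ge |T|+1$. Setting $S=V(C_1)\cup\cdots\cup V(C_k)\subseteq\core(G)$ gives $G[S]=C_1\sqcup\cdots\sqcup C_k$ and $N(S)\setminus S\subseteq T$. Writing $K=\core(G[N[S]])$ and using $K\setminus S\subseteq T$ as the deficiency witness in the Tutte--Berge formula applied to $G[K]$ yields $o(G[K]-(K\setminus S))=o(G[S])=k$; combining this with Theorem~\ref{thm:exact es-Delta} gives $\es(G[N[S]])\ge (|S|+k)/2$, while $|V(G[N[S]])|\le |S|+|T|<|S|+k$, forcing $\es(G[N[S]])>|V(G[N[S]])|/2$, contradicting (3). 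The crux is choosing $S$ as the union of these odd components, so that the Tutte--Berge deficiency couples cleanly with the formula of Theorem~\ref{thm:exact es-Delta}.
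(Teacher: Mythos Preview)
Your argument is correct. The overall scheme matches the paper's (showing $(1)\Leftrightarrow(2)$ and $(1)\Leftrightarrow(3)$ separately), but the individual implications are handled differently in two places, and in one place you give a fuller justification than the paper does.

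For $(1)\Rightarrow(2)$, the paper proceeds by induction on $\es(G)$: it picks $e\in L\setminus M$ from a minimum mitigating set $L$, applies the induction hypothesis to $G-e$, and then repairs the possible single adjacency between $e$ and the inductively obtained matching by pushing along $M$. Your approach instead selects a matching $M$ saturating $\core(G)$ with the maximum number of edges inside $G[\core(G)]$ and uses an augmenting-path swap to force $|M\cap E(G[\core(G)])|=\alpha'(G[\core(G)])$; then Theorem~\ref{thm:exact es-Delta} immediately gives $|M|=\es(G)$ (after discarding any edges of $M$ disjoint from $\core(G)$, which you should mention explicitly). Your route is shorter and avoids the somewhat delicate replacement chain $e_i,e_i'$ that the paper runs.

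For $(1)\Rightarrow(3)$, the paper simply asserts that this direction is clear. Your argument supplies the missing content: the observation that every $v\in\core(G[N[S]])$ has $N_G(v)\subseteq N[S]$, so the $M$-partner of $v$ stays inside $N[S]$ and $G[N[S]]$ again satisfies (1). This is exactly the point one needs, and it makes the implication genuinely transparent.

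For $(3)\Rightarrow(1)$, both arguments are contrapositive via Lemma~\ref{lem: tutte lemma}, and both take $S$ to be the union of the odd components $C_1,\ldots,C_k$ produced there. The paper then bounds $\es(G[N[S]])$ by the elementary observation that no edge of $G$ can touch two distinct $C_i$'s, so any mitigating set needs at least $\lceil |V(C_i)|/2\rceil=(|V(C_i)|+1)/2$ edges meeting $C_i$, and these contributions are disjoint across $i$. You instead identify $K=\core(G[N[S]])\supseteq S$, apply Tutte--Berge to $G[K]$ with the deficiency set $K\setminus S\subseteq T$, and feed the result into Theorem~\ref{thm:exact es-Delta}. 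Both give the same inequality $\es(G[N[S]])\ge (|S|+k)/2>(|S|+|T|)/2\ge |V(G[N[S]])|/2$. The paper's counting is slightly more elementary; your use of Tutte--Berge packages the same idea more structurally and makes the role of Theorem~\ref{thm:exact es-Delta} explicit.
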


\begin{proof}
We first show that (1) and (2) are equivalent. It is clear that if there is a minimum mitigating set which is a matching, this matching saturates $\core(G)$. For the \san{converse}, suppose $M$ is a matching that saturates $\core(G)$ and let $L$ be a minimum mitigating set for $G$. We claim that $L$ could be chosen as a matching using induction on $\es(G)$. If $\es(G) = 1$, the claim is obvious. Hence, assume that $\es(G) \geq 2$. Clearly, $|M| \geq |L|$. If $L - M = \varnothing$, $L$ must be a matching and the claim is proved. Thus, assume $e \in L - M$. Since $|L| \geq 2$, we have $\Delta(G - e) = \Delta(G)$. Therefore, $M$ is a matching that saturates $\core(G- e)$. By the induction hypothesis, there exists a minimum mitigating set for $G -e$, say $L'$,  which is a matching and $|L'| = \es(G) - 1$. Obviously, $L'' = L' \cup \{e\}$ is a minimum mitigating set for $G$. If $e$ is not adjacent to any edge in $L'$, the claim is proved. Since $L'$ is \san{a matching} and $L''$ is a minimum mitigating set, $e$ is adjacent to at most one edge in $L'$, say $e'$. Let $e=uv$ and $e'= vw$ for some $u,v,w \in V(G)$. It is easy to see that $d(u) = \Delta$. Since $uv \notin M$, there exists $e_1 = uy_1 \in M$. Replace $e$ with $e_1$ in $L''$ and the resulting mitigating set is still minimum and is a matching, unless there exists  $e'_1 = y_1x_1 \in L''$ and $d(x_1)=\Delta$. Otherwise, if we remove $e'_1$ from $L''$, the resulting set of edges is still a mitigating set. Replace $e'_1$ with some $e_2 = x_1y_2 \in M$ to obtain a new minimum mitigating set and continue this operation and in each step replace $e'_i$ with $e_{i+1}$. Note that if $d(x_{i}) < \Delta$ or there is no edge in $L'$ saturating $y_{i+1}$, there is no need to continue the operation. Moreover, since $M$ is a matching, all $x_i$ and $y_i$ are distinct. Therefore, this operation will be stopped after finitely many steps. So, when the replacement is done, the resulting minimum mitigating set is a matching and the claim is proved.

We next prove that (1) and (3) are equivalent. \san{For any} $S \subseteq V(G)$, set $H(S) = G[N[S]]$.  It is clear that if $G$ has a matching that saturates $\core(G)$, then (3) holds. For the converse, set $n = |V(G)|$. Let \san{$A = \core(G)$.} By contradiction assume that $G$ has no matching that covers $A$. Therefore, by Lemma \ref{lem: tutte lemma}, there exists $S \subseteq V(G)$ such that
    \[
        \oa(G - S) \geq |S| + \san{1\,.}
    \]

 \san{Let} $C_1, C_2, \ldots , C_{|S| + 1} \subseteq A$ \san{be distinct} odd components contained \san{in $G[A]$}. Let $C = \bigcup_{i=1} ^ {|S| + 1} \san{V(C_i)}$. Clearly, $|V(H(C))| \leq |S| + \san{|C|}$. Since $V(C_i)$ has odd cardinality and consists of vertices of degree $\Delta$, we have
\[
    \es \lp \san{H(V(C_i))} \rp \geq \frac{|V(C_i)|+1}{2}\,.
\]
Therefore,
\[
    \es \lp H(C) \rp \geq \sum_{i=1}^{|S|+1} \frac{|V(C_i)|+1}{2} \geq \frac{\san{|C|}}{2} + \frac{|S| + 1}{2} > \frac{\san{|C|} + |S|}{2} \geq \frac{|V(H(C))|}{2}\,,
\]
a contradiction and we are done.
\end{proof}

\section{Graph's regularization and its applications}
\label{sec:regularization}

In this \san{section,} we prove that to derive an upper bound of the form $\es(G) \leq c |V(G)|$ for an arbitrary graph, where $c$ is a given constant, it suffices to prove the bound for regular graphs. \san{This is modelled on the work~\cite[Section~4]{borg-2022};  Lemma~\ref{lem: regularization lemma} is an analogue of Inequality (1) in~\cite{borg-2022}, and Theorem~\ref{thm:reduction-to-regular} is an analogue of~\cite[Theorem~3]{borg-2022}. The idea is the same.} 

For a graph $G$ we construct its {\em regularization} $R(G)$ as follows. If $G$ is regular, then set $R(G) = G$. Assume now that $G$ is not regular and set $A_G = V(G) - \core(G)$. Then the graph $G^{(1)}$ is obtained from two disjoint copies of $G$, say $G'$ and $G''$, by adding a matching between the corresponding vertices in $A_{G'}$ and $A_{G''}$. Note that $\Delta(G^{(1)}) = \Delta(G)$ and $\delta(G^{(1)}) = \delta(G) + 1$. If $G^{(1)}$ is not yet regular, we repeat the same construction on $G^{(1)}$ to \san{obtain} $G^{(2)}$. Repeating the construction $\Delta(G) - \delta(G)$ \san{times,} we arrive at the regularization $R(G)$ of $G$: 
$$R(G) = G^{(\Delta(G) - \delta(G))}\,,$$ 
which is a $\Delta(G)$-regular graph. 

A key property of $R(G)$ is the following. 

\begin{lemma} \label{lem: regularization lemma}
If $G$ is a graph, then  
\[
    \frac{|V(R(G))|}{\es(R(G))} \leq \frac{|V(G)|}{\es(G)}\,.
\]
\end{lemma}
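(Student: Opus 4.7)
The plan is to reduce the statement to a single iteration of the regularization construction. Since $R(G)$ is obtained from $G$ by iterating the map $G \mapsto G^{(1)}$ exactly $\Delta(G) - \delta(G)$ times (and $R(G) = G$ if $G$ is already regular, in which case the inequality is trivially an equality), it suffices to prove
\[
\frac{|V(G^{(1)})|}{\es(G^{(1)})} \leq \frac{|V(G)|}{\es(G)}
\]
whenever $G$ is not regular, and then iterate this one-step inequality. Since $|V(G^{(1)})| = 2|V(G)|$, this is equivalent to the single numerical assertion $\es(G^{(1)}) \geq 2\,\es(G)$, which is what I would prove directly.

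To prove $\es(G^{(1)}) \geq 2\,\es(G)$, I would take a minimum mitigating set $S^*$ of $G^{(1)}$ and split it according to the two copies of $G$ used in the construction. Writing $G'$ and $G''$ for these copies, partition
\[
S^* = S^*_1 \cup S^*_2 \cup S^*_M,
\]
where $S^*_1 = S^* \cap E(G')$, $S^*_2 = S^* \cap E(G'')$, and $S^*_M$ consists of those matching edges joining $A_{G'}$ and $A_{G''}$ that happen to lie in $S^*$. The key claim is that $S^*_1$ is a mitigating set of $G' \cong G$ and, symmetrically, $S^*_2$ is a mitigating set of $G'' \cong G$. Granted the claim, one gets $\es(G^{(1)}) = |S^*| \geq |S^*_1| + |S^*_2| \geq 2\,\es(G)$, which finishes the reduction.

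To justify the claim, fix any $v \in \core(G')$. Then $d_{G'}(v) = \Delta(G)$, and because $v \notin A_{G'}$, the construction attaches no matching edge at $v$; hence $d_{G^{(1)}}(v) = \Delta(G) = \Delta(G^{(1)})$, and every edge of $G^{(1)}$ incident with $v$ lies in $E(G')$. Since $v \in \core(G^{(1)})$ and $S^*$ is mitigating, $S^*$ must contain some edge incident with $v$, and that edge therefore belongs to $S^*_1$. Thus every vertex of $\core(G')$ is covered by $S^*_1$, so $\Delta(G' - S^*_1) < \Delta(G)$ and $S^*_1$ is indeed a mitigating set of $G'$.

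The step I expect to require the most care is handling the ``new'' core of $G^{(1)}$: namely, those vertices of $A_{G'} \cup A_{G''}$ of original degree $\Delta(G) - 1$ that become maximum-degree vertices in $G^{(1)}$. One might worry that $S^*$ spends many of its edges as matching edges in $S^*_M$ in order to cover these new core vertices, thereby leaving $S^*_1$ and $S^*_2$ too small to be mitigating in $G$. The argument sketched above sidesteps this precisely because the claim only needs $S^*_1$ to cover $\core(G') = \core(G)$, whose vertices are disjoint from $A_{G'}$ and so have no incident matching edges at all; whatever $S^*$ does on $A_{G'} \cup A_{G''}$ is irrelevant to $S^*_1$ being mitigating for $G$.
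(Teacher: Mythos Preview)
Your proof is correct and follows essentially the same route as the paper: reduce to a single step of the construction, show $\es(G^{(1)}) \ge 2\,\es(G)$ by intersecting a minimum mitigating set of $G^{(1)}$ with each copy of $G$, and use that the added matching edges only touch vertices outside $\core(G)$ to conclude these intersections are mitigating in $G$. Your write-up simply spells out in more detail the one-line justification the paper gives (``every edge between $G'$ and $G''$ connects vertices of non-maximum degree'').
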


\begin{proof}
There is nothing to prove if $G$ is regular, hence assume in the rest that $\Delta(G)-\delta(G)\ge 1$. We first claim that
    \[
        \frac{|V(G^{(1)})|}{\es(G^{(1)})} \leq \frac{|V(G)|}{\es(G)}\,.
    \]
Let $M$ be a mitigating set of $G^{(1)}$ and let $G'$ and $G''$ be the two copies of $G$ in $G^{(1)}$. As every edge between $G'$ and $G''$ connects vertices of \san{non-maximum} degree in $G'$ and $G''$, the sets $M \cap E(G')$ and $M \cap E(G'')$ are mitigating sets of $G'$ and $G''$ respectively. Thus, $\es(G^{(1)}) \geq 2\es(G)$. Since $|V(G^{(1)})| = 2|V(G)|$, the claim is proved. Proceeding by induction we analogously infer that 
    \[
        \frac{|V(G^{(i)})|}{\es(G^{(i)})} \leq \frac{|V(G)|}{\es(G)}
    \]
holds for each $i \in \{2, \ldots, \Delta(G) - \delta(G)\}$. Thus the assertion. 
\end{proof}

The announced reduction to regular graphs now reads as follows. 

\begin{theorem}
\label{thm:reduction-to-regular}
If there exists a constant $0 < c_{\Delta} < 1$, such that $\es(H) \leq  c_{\Delta}|V(H)|$ holds for every $\Delta$-regular graph $H$, then $\es(G) \leq c_{\Delta}|V(G)|$ holds for every graph $G$ with $\Delta(G) = \Delta$.
\end{theorem}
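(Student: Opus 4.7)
The plan is to deduce the theorem in essentially one line from Lemma~\ref{lem: regularization lemma}. Let $G$ be an arbitrary graph with $\Delta(G) = \Delta$, and form its regularization $R(G)$, which is $\Delta$-regular. Since $G$ has at least one edge, $\core(G)$ is non-empty, so Theorem~\ref{thm:exact es-Delta} gives $\es(G) \geq 1$; similarly $\es(R(G)) \geq 1$. In particular, both ratios $|V(G)|/\es(G)$ and $|V(R(G))|/\es(R(G))$ are well-defined positive real numbers, and the inversion of inequalities we are about to perform is legitimate.

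Next, apply the hypothesis to $H = R(G)$: since $R(G)$ is $\Delta$-regular, we get $\es(R(G)) \leq c_{\Delta} |V(R(G))|$. Because $c_\Delta > 0$, this rearranges to
\[
    \frac{|V(R(G))|}{\es(R(G))} \;\geq\; \frac{1}{c_{\Delta}}\,.
\]
Combining this with Lemma~\ref{lem: regularization lemma}, which asserts $|V(R(G))|/\es(R(G)) \leq |V(G)|/\es(G)$, yields
\[
    \frac{|V(G)|}{\es(G)} \;\geq\; \frac{1}{c_{\Delta}}\,,
\]
and a final rearrangement gives $\es(G) \leq c_{\Delta}|V(G)|$, as desired.

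There is no real obstacle here; the conceptual work was already absorbed into the construction of $R(G)$ and the proof of Lemma~\ref{lem: regularization lemma}. The only point requiring mild care is ensuring positivity of the relevant quantities before inverting the inequality, which is guaranteed by the standing assumption that all graphs considered have at least one edge (hence $\es \geq 1$) and by the hypothesis $c_{\Delta} > 0$.
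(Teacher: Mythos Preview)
Your proof is correct and follows essentially the same route as the paper's: both combine Lemma~\ref{lem: regularization lemma} with the hypothesis applied to $R(G)$, the only difference being that the paper writes the chain as $\es(G) \le \es(R(G))\,|V(G)|/|V(R(G))| \le c_{\Delta}|V(G)|$ rather than passing through reciprocals. Your extra remark about positivity (so that the inversion is legitimate) is a nice bit of care the paper leaves implicit.
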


\begin{proof}
Let $G$ be an arbitrary graph with $\Delta(G) = \Delta$. As there is nothing to prove if $G$ is regular, assume this is not the case. Then by Lemma~\ref{lem: regularization lemma} and the theorem's assumption we get
\begin{align*}
\es(G) & \le \es(R(G)) \frac{|V(G)|}{|V(R(G))|} \le c_{\Delta} |V(R(G))| \frac{|V(G)|}{|V(R(G))|} = c_{\Delta} |V(G)|
\end{align*}
and we are done. 
\end{proof}

Another applications of Lemma~\ref{lem: regularization lemma} is the following bound on the $\Delta$-edge stability number of a graph in terms of its odd girth. 

\begin{theorem}
If $G$ \san{is} a graph of order $n$ and ${\rm og}(G) = 2k+1$, $k\ge 1$, then $\es(G)\leq\frac{k+1}{2k+1}n$.
\end{theorem}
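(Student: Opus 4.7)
The plan is to reduce to the $\Delta$-regular case via Lemma~\ref{lem: regularization lemma} and then bound $\alpha'$ using the Gallai--Edmonds decomposition. I would first show that the regularization preserves the odd girth lower bound; by induction it suffices to check ${\rm og}(G^{(1)}) \ge 2k+1$. Writing $G^{(1)} = G' \cup G'' \cup M$, the matching $M$ is an edge cut between $V(G')$ and $V(G'')$, so any odd cycle $C$ of length $L$ uses an even number $2j$ of edges of $M$. The remaining $L - 2j$ edges of $C$, translated via the canonical isomorphism $G'' \to G'$, form a closed walk in $G$ of odd length $L - 2j$. Such a walk contains an odd cycle, which by hypothesis has length at least $2k+1$, so $L \ge 2k+1+2j \ge 2k+1$.

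By Lemma~\ref{lem: regularization lemma}, it now suffices to prove $\es(H) \le \frac{k+1}{2k+1}|V(H)|$ for every $\Delta$-regular $H$ with ${\rm og}(H) \ge 2k+1$. For such an $H$, $\core(H) = V(H)$, and Theorem~\ref{thm:exact es-Delta} gives $\es(H) = |V(H)| - \alpha'(H)$; the goal becomes $\alpha'(H) \ge \frac{k}{2k+1}|V(H)|$.

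For this matching bound, apply the Gallai--Edmonds decomposition $V(H) = D \cup A \cup C$ with $A = N(D) \setminus D$, so that every component of $H[D]$ is factor-critical and $|V(H)| - 2\alpha'(H) = c - |A|$, where $c$ is the number of components of $H[D]$. Split these into $s$ singleton components and $t$ components of order at least $3$. A factor-critical graph on at least three vertices is non-bipartite, hence contains an odd cycle; by the odd girth hypothesis each such component has at least $2k+1$ vertices, yielding $t(2k+1) \le |D| \le |V(H)|$. For a singleton $\{v\}$ of $H[D]$, all $\Delta$ neighbors of $v$ lie in $A$ since $N(D) \subseteq A \cup D$; double-counting the edges between singletons and $A$ gives $s\Delta \le |A|\Delta$, so $s \le |A|$. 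Therefore
$$ |V(H)| - 2\alpha'(H) = (s - |A|) + t \le t \le \frac{|V(H)|}{2k+1}\,, $$
which rearranges to the required matching bound and completes the argument.

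The main obstacle is verifying odd girth preservation under the regularization; once that is in place, the Gallai--Edmonds analysis in the regular case is short. A secondary point to watch is that the two ingredients used there (factor-criticality forces non-bipartite components of order at least $2k+1$, and regularity bounds $s$ by $|A|$) both rely crucially on the regularity of $H$, confirming that reduction to the regular case is essential.
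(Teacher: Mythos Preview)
Your argument is correct and the odd-girth preservation step matches the paper's, but your treatment of the regular case takes a genuinely different route. The paper does not bound $\alpha'(R(G))$ at all; instead it quotes the Hajnal--Tutte theorem that every regular graph has a $\{1,2\}$-factor $F$, and then builds a mitigating set directly by taking $\lceil |V(C)|/2\rceil$ edges from each component $C$ of $F$. The odd-girth hypothesis enters only to guarantee that each odd component of $F$ (necessarily an odd cycle) has at least $2k+1$ vertices, whence $\lceil |V(C)|/2\rceil \le \frac{k+1}{2k+1}|V(C)|$ for every component. Your approach replaces this somewhat specialized factor theorem by the Gallai--Edmonds structure theorem together with the identity $\es(H)=|V(H)|-\alpha'(H)$ from Theorem~\ref{thm:exact es-Delta}; the odd-girth hypothesis is used instead to force every non-singleton factor-critical component of $H[D]$ (which is automatically non-bipartite) to have at least $2k+1$ vertices, and regularity is used to bound the number of singleton components by $|A|$. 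The paper's argument is slightly more constructive (it exhibits an explicit mitigating set inside a $\{1,2\}$-factor), while yours is more self-contained in that it avoids the $\{1,2\}$-factor existence result in favor of a standard matching tool.
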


\begin{proof}
We claim that ${\rm og}(G^{(i)})\ge 2k+1$ holds for each $i \in [\Delta - \delta]$. Consider first $G^{(1)}$ and let $W_1$ be an arbitrary odd closed walk in it. Assume that $W_1$ passes through $G'$ as well as through $G''$. Let $e=u'u''$ and $f=v'v''$ be two edges of $W_1$ between $G'$ and $G''$ (where $u', v'\in V(G')$ and $u'',v''\in V(G'')$) occurring consecutively on $W_1$. Let $W''_{u''v''}$ be the \san{$u''$-$v''$-subwalk} of $W_1$. Then $W''_{u''v''}$ is contained in $G''$. Now replace the $u'-u''-W''_{u''v''}-v''-v'$ subwalk of $W_1$ by the walk $u'-W'_{u'v'}-v'$, where $W'_{u'v'}$  is the isomorphic copy of 
$W''_{u''v''}$ in $G'$. Repeating this process if necessary, we arrive at a closed walk of $G^{(1)}$ which lies completely in $G'$ and is shorter (or of equal length if $W_1$ already lies completely in $G'$) than $W_1$. As $G'$ is isomorphic to $G$, this \san{proves} the claim for $G^{(1)}$. The argument for $G^{(i)}$, $i\in \{2, \ldots, \Delta - \delta\}$ is then analogous. 


$R(G)$ is a regular graph. \san{From the introduction of ~\cite{kano-1986} we recall that Hajnal~\cite{hajnal-1965} and Tutte~\cite{tutte-1952} proved that a regular graph has a $\{1,2\}$-factor $F$ (each of whose components are regular).} Clearly for any component $C$ of $F$, one can remove $\lceil{\frac{|V(C)|}{2}}\rceil$ edges of $C$ which saturate $V(C)$. Since $\lceil{\frac{|V(C)|}{2}}\rceil \leq \frac{k+1}{2k + 1}|V(C)|$, we obtain
    \[
        \es(R(G)) \leq \frac{k+1}{2k + 1}|V(R(G))|\,.
    \]
By Lemma~\ref{lem: regularization lemma} we then get 
\begin{align*}
\es(G) & \le \es(R(G)) \frac{|V(G)|}{|V(R(G))|} \le \frac{k+1}{2k + 1} |V(R(G))| \frac{|V(G)|}{|V(R(G))|} = \frac{k+1}{2k + 1} |V(G)|
\end{align*}
and we are done.     
\end{proof}

\section{The regular case}
\label{sec:regular}

In view of Theorem~\ref{thm:reduction-to-regular}, in this section we take a closer look \san{at} regular graphs. For this \san{sake,} we first recall the following fundamental result due to Henning and Yeo. 

\begin{theorem}[Henning and Yeo, \cite{henning-2007}] 
\label{thm:mike&anders}
Let $G$ be a connected, $k$-regular graph of order \san{$n$, where $k \geq 2$. If $k$} is even, then 
    \[
        \alpha'(G) \geq \min \left\{ \frac{(k^2 + 4)n}{2(k^2 + k + 2)}, \frac{n-1}{2} \right\}\,,
    \]
and if \san{$k$ is odd,} then 
    \[
        \alpha'(G) \geq \frac{(k^3 - k^2 - 2)n -2k + 2}{2(k^3 - 3k)}\,.
    \]
Moreover, both bounds are tight.
\end{theorem}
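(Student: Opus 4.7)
The plan is to attack the lower bound on $\alpha'(G)$ via the Tutte--Berge formula, which gives
\[
    \alpha'(G) = \frac{1}{2}\bigl(n - \max_{S \subseteq V(G)}\bigl(o(G-S) - |S|\bigr)\bigr)\,,
\]
so the problem reduces to upper-bounding the deficiency $\max_S(o(G-S) - |S|)$. Fix an arbitrary $S \subseteq V(G)$ and let $C_1, \ldots, C_q$ denote the odd components of $G-S$ with $n_i = |V(C_i)|$. Because $G$ is $k$-regular, the number of edges between $C_i$ and $S$ equals $kn_i - 2|E(C_i)|$, and summing over $i$ yields $\sum_i (kn_i - 2|E(C_i)|) \le k|S|$. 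This single global inequality, combined with the pointwise constraints coming from parity and from $|E(C_i)| \le \binom{n_i}{2}$, will be the heart of the argument.

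The next step is to exploit the parity of $k$ carefully. When $k$ is odd, $kn_i - 2|E(C_i)|$ is odd for odd $n_i$, so each $C_i$ sends at least one edge to $S$, and in fact one can show that components with $n_i = 1$ contribute $k$ edges, components with $n_i = 3$ contribute at least $3k - 6$, and in general components with small odd size are forced to spend a large number of ``edge-budget'' from $S$. When $k$ is even, the count $kn_i - 2|E(C_i)|$ is even and can be zero precisely when $C_i$ is $k$-regular on an odd number of vertices; the minimum such case is $K_{k+1}$, which can only occur as a whole connected component of $G$ and so contributes at most one odd component (by connectedness of $G$), explaining the $\frac{n-1}{2}$ term in the minimum. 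I would then separate $C_i$ into ``small'' and ``large'' components and solve a linear program in the counts of each small size against the edge budget $k|S|$ to extract the stated coefficients.

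To carry this out cleanly, I would first invoke the Gallai--Edmonds decomposition $V(G) = D \cup A \cup C$ (where $D$ is the set of vertices missed by some maximum matching, $A = N(D) \setminus D$, and $C$ is the rest), so that the deficiency is realized by $S = A$ and the odd components of $G - A$ contained in $D$ are factor-critical. Regularity of $G$ together with factor-criticality limits the edge density inside each such component, and the key structural claim is that a factor-critical component of order $n_i$ in a $k$-regular graph sends at least $kn_i - (n_i - 1)(n_i)$ edges (trivially) and at least $n_i + 1$ edges (from factor-critical structure) to $A$. Averaging these bounds across the possible sizes of $n_i$ should recover the coefficients $\frac{k^2+4}{2(k^2+k+2)}$ and $\frac{k^3 - k^2 - 2}{2(k^3 - 3k)}$; those rational numbers look very much like the extremizer of a small LP with a few small-size component types.

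The main obstacle will be identifying and bookkeeping the small cases: for $k$ even one must separate components of size $1, 3, \ldots, k-1$ versus larger, and for $k$ odd the components of size $1, 3, \ldots, k$ each have their own tight contribution, and one must argue that the worst-case mix achieves the stated ratio. Tightness of the bound would be established by exhibiting an extremal $k$-regular graph built by joining many copies of the worst small factor-critical piece through an appropriate $A$-set; the structure of that extremal family is what pins down the exact rational coefficients above.
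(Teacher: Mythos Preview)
The paper does not prove this theorem at all. Theorem~\ref{thm:mike&anders} is quoted verbatim from Henning and Yeo~\cite{henning-2007} as a known result (``we first recall the following fundamental result due to Henning and Yeo''), and the paper only \emph{uses} it to deduce Corollaries~\ref{cor: es for regulars}--\ref{cor: es for odd delta}. There is therefore no ``paper's own proof'' for your proposal to be compared against.

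As for the proposal itself, your overall strategy---Tutte--Berge plus edge-counting between the odd components and $S$, with a case analysis on the sizes of the odd components---is indeed the route taken in~\cite{henning-2007}. But what you have written is a plan, not a proof: the specific coefficients arise from a rather delicate optimisation, and phrases like ``should recover the coefficients'' and ``the main obstacle will be identifying and bookkeeping the small cases'' are exactly the parts where all the work lies. One concrete slip: your claim that a factor-critical component of order $n_i$ sends ``at least $n_i+1$ edges'' to $A$ is not true in general (take $G=K_{k+1}$ with $k$ even: the whole graph is a single factor-critical odd component sending zero edges out, which is precisely why the $\frac{n-1}{2}$ term appears). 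The actual argument in~\cite{henning-2007} handles the small odd components one size at a time and is considerably more intricate than your sketch suggests.
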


From Theorem~\ref{thm:mike&anders} we can deduce the following consequence.

\begin{corollary} \label{cor: es for regulars}
Let $G$ be a connected, $k$-regular graph of order \san{$n$, where $k \geq 2$. If $k$} is even, then 
    \[
        \es(G) \leq \max \left\{ \left(1-\frac{k^2 + 4}{2(k^2 + k + 2)}\right) n, \frac{n+1}{2} \right\}\,,
    \]
and if $k\ge 3$ is odd, then 
    \[
        \es(G) \leq \frac{(k^3 + k^2  -6k + 2)n +2k - 2}{2(k^3 - 3k)}\,.
    \]
Moreover, both bounds are tight.
\end{corollary}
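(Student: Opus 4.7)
The plan is to observe that for any $k$-regular graph $G$ we have $\core(G) = V(G)$, so Theorem~\ref{thm:exact es-Delta} specializes to the clean identity
\[
    \es(G) = n - \alpha'(G)\,.
\]
This converts the problem of upper-bounding $\es(G)$ into the problem of lower-bounding $\alpha'(G)$, which is exactly what Theorem~\ref{thm:mike&anders} of Henning and Yeo provides. So the first step is to record this identity; both stated cases then reduce to a direct substitution.

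For $k$ even, I would substitute the Henning--Yeo bound to get
\[
    \es(G) \le n - \min\left\{\frac{(k^2+4)n}{2(k^2+k+2)},\ \frac{n-1}{2}\right\}
          = \max\left\{\left(1-\frac{k^2+4}{2(k^2+k+2)}\right)n,\ \frac{n+1}{2}\right\}\,,
\]
using the elementary identity $a - \min\{x,y\} = \max\{a-x,a-y\}$. For $k \ge 3$ odd, the substitution is a one-line algebraic simplification:
\[
    \es(G) \le n - \frac{(k^3 - k^2 - 2)n - 2k + 2}{2(k^3 - 3k)}
          = \frac{2(k^3-3k)n - (k^3 - k^2 - 2)n + 2k - 2}{2(k^3 - 3k)}\,,
\]
which after collecting coefficients of $n$ yields the stated $(k^3 + k^2 - 6k + 2)n + 2k - 2$ in the numerator. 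This is genuinely routine, so I would carry it out compactly rather than step by step.

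The only non-trivial part is the tightness claim. Since $\es(G) = n - \alpha'(G)$ on $k$-regular graphs, any graph attaining equality in the Henning--Yeo bound attains equality here. So I would invoke the sharpness part of Theorem~\ref{thm:mike&anders}: the same extremal connected $k$-regular graphs constructed by Henning and Yeo (for each parity of $k$) realize the bounds above. I would add one sentence indicating which of the two quantities inside the $\max$ is active in the even case for the corresponding extremal family, so the reader can see that both branches of the $\max$ are genuinely attained in the relevant regime of $n$.

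The main obstacle, such as it is, is only cosmetic: making sure the algebraic rearrangement in the odd case is transparent, and that the reference to the Henning--Yeo extremal constructions is specific enough to certify tightness without reproducing their construction. There is no conceptual difficulty beyond correctly applying Theorem~\ref{thm:exact es-Delta} and Theorem~\ref{thm:mike&anders}.
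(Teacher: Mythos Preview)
Your proposal is correct and follows essentially the same argument as the paper: apply Theorem~\ref{thm:exact es-Delta} to get $\es(G)=n-\alpha'(G)$ for regular $G$, substitute the Henning--Yeo bounds from Theorem~\ref{thm:mike&anders}, and inherit tightness from their extremal constructions. The paper presents the odd case as ``derived analogously'' and the tightness in a single sentence, so your slightly more explicit treatment is fine but not required.
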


\begin{proof}
By Theorem~\ref{thm:exact es-Delta} and the assumption that $G$ is regular, we have  $\es(G) = n - \alpha'(G)$. Therefore, if $k$ is even, then using Theorem~\ref{thm:mike&anders} we can estimate as follows: 
\begin{align*}
\es(G) & \le n - \min \left\{ \frac{(k^2 + 4)n}{2(k^2 + k + 2)}, \frac{n-1}{2} \right\} \\
& = \max \left\{ n - \frac{(k^2 + 4)n}{2(k^2 + k + 2)}, n - \frac{n-1}{2} \right\} \\
& = \max \left\{ \left(1-\frac{k^2 + 4}{2(k^2 + k + 2)}\right) n, \frac{n+1}{2} \right\}\,.
\end{align*}
The estimate for odd $k$ is derived analogously. 

The tightness follows by the tightness of the estimates from Theorem~\ref{thm:mike&anders}.
\end{proof}

As discussed in~\cite{henning-2007}, the bound $(n-1)/2$ from Theorem~\ref{thm:mike&anders} is only necessary to cover some cases when $n$ is very small or $k=2$. In particular, it is not needed for $k\ge4$, cf.~\cite[Corollary 1]{henning-2007}. Therefore, we can also state the following easier-to-read corollary for all ``non-trivial" even $k$. 

\begin{corollary} \label{cor: es for even regulars at least 4}
If $G$ is a connected graph of order $n$ \san{and of even} maximum degree \san{$k \geq 4$}, then 
    \[
        \es(G) \leq \left(1-\frac{k^2 + 4}{2(k^2 + k + 2)}\right) n\,.
    \]
\end{corollary}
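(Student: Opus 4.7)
The plan is to derive the corollary as a direct combination of three results already available in the paper: the reduction to regular graphs (Theorem~\ref{thm:reduction-to-regular}), the Henning--Yeo lower bound on $\alpha'$ (Theorem~\ref{thm:mike&anders}), and the remark immediately preceding the corollary that for $k \geq 4$ the $\frac{n-1}{2}$ term in Theorem~\ref{thm:mike&anders} can be dropped thanks to \cite[Corollary~1]{henning-2007}.

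Setting $c_k := 1 - \frac{k^2+4}{2(k^2+k+2)} = \frac{k(k+2)}{2(k^2+k+2)}$, one checks in one line that $0 < c_k < 1$ for $k \geq 4$, so Theorem~\ref{thm:reduction-to-regular} applies and reduces the task to proving $\es(H) \leq c_k |V(H)|$ for every $k$-regular graph $H$. The connectedness hypothesis in the statement therefore plays no essential role; if the regular graph $H$ arising from the reduction is disconnected, I would split it into components and sum the resulting inequalities. For a connected $k$-regular $H$ of order $m$, the improved version of Theorem~\ref{thm:mike&anders} cited above delivers $\alpha'(H) \geq \frac{(k^2+4)m}{2(k^2+k+2)}$ with no auxiliary $(m-1)/2$ term needed, since $k \geq 4$. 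Since $H$ is regular, $\core(H) = V(H)$ and Theorem~\ref{thm:exact es-Delta} reduces to $\es(H) = m - \alpha'(H)$; plugging in the $\alpha'$ bound and rearranging immediately yields $\es(H) \leq c_k m$, which is exactly the required inequality.

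I do not foresee any substantive obstacle. The corollary is essentially the even-$k$ case of Corollary~\ref{cor: es for regulars} with the $\max$ collapsed to a single expression by \cite[Corollary~1]{henning-2007}, then lifted from regular to arbitrary maximum-degree-$k$ graphs by Theorem~\ref{thm:reduction-to-regular}. The only small bookkeeping points are the verification $0 < c_k < 1$ needed to invoke Theorem~\ref{thm:reduction-to-regular}, and the observation that Theorem~\ref{thm:mike&anders} can be applied componentwise to a possibly disconnected regular reduction.
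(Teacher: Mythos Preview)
Your proposal is correct and follows the paper's approach: the paper gives no formal proof but derives the corollary from Corollary~\ref{cor: es for regulars} by dropping the $\frac{n+1}{2}$ term via \cite[Corollary~1]{henning-2007}. Your explicit invocation of Theorem~\ref{thm:reduction-to-regular} to pass from $k$-regular graphs to arbitrary graphs of maximum degree $k$ is a step the paper's preamble omits here (it does make it explicit for the odd-$k$ analogue, Corollary~\ref{cor: es for odd delta}), so your argument is, if anything, slightly more complete than what the paper provides.
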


If $k=4$, then the bound of Corollary~\ref{cor: es for even regulars at least 4} reads as  $\es(G) \leq \frac{6}{11} n$. We next construct  $2$-connected, $4$-regular graphs \san{which attain} this bound. Let $H_i = K_5 - e$, $i\in [2k]$, and let $G_k$ be the graph of order $n = 11k$ as shown in Fig.~\ref{fig:Gk}. 

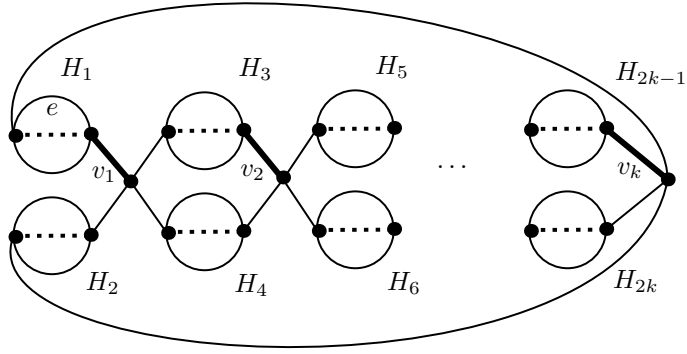
\begin{figure}[ht!]
\centering
\tikzset{every picture/.style={line width=0.75pt}} 
\begin{tikzpicture}[x=0.75pt,y=0.75pt,yscale=-1,xscale=1]

\draw   (57.03,70.72) .. controls (56.98,60.02) and (65.62,51.3) .. (76.33,51.25) .. controls (87.03,51.2) and (95.75,59.84) .. (95.8,70.55) .. controls (95.85,81.26) and (87.21,89.98) .. (76.5,90.03) .. controls (65.79,90.07) and (57.07,81.43) .. (57.03,70.72) -- cycle ;
\draw [line width=1.5]  [dash pattern={on 1.69pt off 2.76pt}]  (57.03,70.72) -- (95.8,70.55) ;
\draw  [line width=5.25] [line join = round][line cap = round] (58,71.2) .. controls (58,71.2) and (58,71.2) .. (58,71.2) ;
\draw  [line width=5.25] [line join = round][line cap = round] (96,70.2) .. controls (96,70.2) and (96,70.2) .. (96,70.2) ;
\draw   (57.03,121.72) .. controls (56.98,111.02) and (65.62,102.3) .. (76.33,102.25) .. controls (87.03,102.2) and (95.75,110.84) .. (95.8,121.55) .. controls (95.85,132.26) and (87.21,140.98) .. (76.5,141.03) .. controls (65.79,141.07) and (57.07,132.43) .. (57.03,121.72) -- cycle ;
\draw [line width=1.5]  [dash pattern={on 1.69pt off 2.76pt}]  (57.03,121.72) -- (95.8,121.55) ;
\draw  [line width=5.25] [line join = round][line cap = round] (58,122.2) .. controls (58,122.2) and (58,122.2) .. (58,122.2) ;
\draw  [line width=5.25] [line join = round][line cap = round] (96,121.2) .. controls (96,121.2) and (96,121.2) .. (96,121.2) ;
\draw   (134.03,68.72) .. controls (133.98,58.02) and (142.62,49.3) .. (153.33,49.25) .. controls (164.03,49.2) and (172.75,57.84) .. (172.8,68.55) .. controls (172.85,79.26) and (164.21,87.98) .. (153.5,88.03) .. controls (142.79,88.07) and (134.07,79.43) .. (134.03,68.72) -- cycle ;
\draw [line width=1.5]  [dash pattern={on 1.69pt off 2.76pt}]  (134.03,68.72) -- (172.8,68.55) ;
\draw  [line width=5.25] [line join = round][line cap = round] (135,69.2) .. controls (135,69.2) and (135,69.2) .. (135,69.2) ;
\draw  [line width=5.25] [line join = round][line cap = round] (173,68.2) .. controls (173,68.2) and (173,68.2) .. (173,68.2) ;
\draw   (134.03,119.72) .. controls (133.98,109.02) and (142.62,100.3) .. (153.33,100.25) .. controls (164.03,100.2) and (172.75,108.84) .. (172.8,119.55) .. controls (172.85,130.26) and (164.21,138.98) .. (153.5,139.03) .. controls (142.79,139.07) and (134.07,130.43) .. (134.03,119.72) -- cycle ;
\draw [line width=1.5]  [dash pattern={on 1.69pt off 2.76pt}]  (134.03,119.72) -- (172.8,119.55) ;
\draw  [line width=5.25] [line join = round][line cap = round] (135,120.2) .. controls (135,120.2) and (135,120.2) .. (135,120.2) ;
\draw  [line width=5.25] [line join = round][line cap = round] (173,119.2) .. controls (173,119.2) and (173,119.2) .. (173,119.2) ;
\draw   (210.03,67.72) .. controls (209.98,57.02) and (218.62,48.3) .. (229.33,48.25) .. controls (240.03,48.2) and (248.75,56.84) .. (248.8,67.55) .. controls (248.85,78.26) and (240.21,86.98) .. (229.5,87.03) .. controls (218.79,87.07) and (210.07,78.43) .. (210.03,67.72) -- cycle ;
\draw [line width=1.5]  [dash pattern={on 1.69pt off 2.76pt}]  (210.03,67.72) -- (248.8,67.55) ;
\draw  [line width=5.25] [line join = round][line cap = round] (211,68.2) .. controls (211,68.2) and (211,68.2) .. (211,68.2) ;
\draw  [line width=5.25] [line join = round][line cap = round] (249,67.2) .. controls (249,67.2) and (249,67.2) .. (249,67.2) ;
\draw   (210.03,118.72) .. controls (209.98,108.02) and (218.62,99.3) .. (229.33,99.25) .. controls (240.03,99.2) and (248.75,107.84) .. (248.8,118.55) .. controls (248.85,129.26) and (240.21,137.98) .. (229.5,138.03) .. controls (218.79,138.07) and (210.07,129.43) .. (210.03,118.72) -- cycle ;
\draw [line width=1.5]  [dash pattern={on 1.69pt off 2.76pt}]  (210.03,118.72) -- (248.8,118.55) ;
\draw  [line width=5.25] [line join = round][line cap = round] (211,119.2) .. controls (211,119.2) and (211,119.2) .. (211,119.2) ;
\draw  [line width=5.25] [line join = round][line cap = round] (249,118.2) .. controls (249,118.2) and (249,118.2) .. (249,118.2) ;
\draw   (317.03,67.72) .. controls (316.98,57.02) and (325.62,48.3) .. (336.33,48.25) .. controls (347.03,48.2) and (355.75,56.84) .. (355.8,67.55) .. controls (355.85,78.26) and (347.21,86.98) .. (336.5,87.03) .. controls (325.79,87.07) and (317.07,78.43) .. (317.03,67.72) -- cycle ;
\draw [line width=1.5]  [dash pattern={on 1.69pt off 2.76pt}]  (317.03,67.72) -- (355.8,67.55) ;
\draw  [line width=5.25] [line join = round][line cap = round] (318,68.2) .. controls (318,68.2) and (318,68.2) .. (318,68.2) ;
\draw  [line width=5.25] [line join = round][line cap = round] (356,67.2) .. controls (356,67.2) and (356,67.2) .. (356,67.2) ;
\draw   (317.03,118.72) .. controls (316.98,108.02) and (325.62,99.3) .. (336.33,99.25) .. controls (347.03,99.2) and (355.75,107.84) .. (355.8,118.55) .. controls (355.85,129.26) and (347.21,137.98) .. (336.5,138.03) .. controls (325.79,138.07) and (317.07,129.43) .. (317.03,118.72) -- cycle ;
\draw [line width=1.5]  [dash pattern={on 1.69pt off 2.76pt}]  (317.03,118.72) -- (355.8,118.55) ;
\draw  [line width=5.25] [line join = round][line cap = round] (318,119.2) .. controls (318,119.2) and (318,119.2) .. (318,119.2) ;
\draw  [line width=5.25] [line join = round][line cap = round] (356,118.2) .. controls (356,118.2) and (356,118.2) .. (356,118.2) ;
\draw  [line width=5.25] [line join = round][line cap = round] (116,94.2) .. controls (116,94.2) and (116,94.2) .. (116,94.2) ;
\draw  [line width=5.25] [line join = round][line cap = round] (193,92.2) .. controls (193,92.2) and (193,92.2) .. (193,92.2) ;
\draw  [line width=5.25] [line join = round][line cap = round] (387,93.2) .. controls (387,93.2) and (387,93.2) .. (387,93.2) ;
\draw [line width=2.25]    (95.8,70.55) -- (116,94.2) ;
\draw    (95.8,121.55) -- (116,94.2) ;
\draw    (116,94.2) -- (134.03,68.72) ;
\draw [line width=2.25]    (172.8,68.55) -- (193,93.2) ;
\draw    (172.8,119.55) -- (193,93.2) ;
\draw    (116,94.2) -- (134.03,119.72) ;
\draw    (193,93.2) -- (210.03,118.72) ;
\draw    (210.03,67.72) -- (193,93.2) ;
\draw [line width=2.25]    (355.8,67.55) -- (365.44,75.48) -- (387,93.2) ;
\draw    (355.8,118.55) -- (387,93.2) ;
\draw    (57.03,70.72) .. controls (36,-38.8) and (384,0.2) .. (387,93.2) ;
\draw    (57.03,121.72) .. controls (30,192.2) and (365,210.2) .. (387,93.2) ;

\draw (269,84.4) node [anchor=north west][inner sep=0.75pt]    {$\dotsc $};
\draw (79,30.4) node [anchor=north west][inner sep=0.75pt]    {$H_{1}$};
\draw (92,138.4) node [anchor=north west][inner sep=0.75pt]    {$H_{2}$};
\draw (167,139.4) node [anchor=north west][inner sep=0.75pt]    {$H_{4}$};
\draw (169,30.4) node [anchor=north west][inner sep=0.75pt]    {$H_{3}$};
\draw (238,29.4) node [anchor=north west][inner sep=0.75pt]    {$H_{5}$};
\draw (244,138.4) node [anchor=north west][inner sep=0.75pt]    {$H_{6}$};
\draw (359,32.4) node [anchor=north west][inner sep=0.75pt]    {$H_{2k-1}$};
\draw (358,137.4) node [anchor=north west][inner sep=0.75pt]    {$H_{2k}$};
\draw (95,85.4) node [anchor=north west][inner sep=0.75pt]    {$v_{1}$};
\draw (170,84.4) node [anchor=north west][inner sep=0.75pt]    {$v_{2}$};
\draw (360,83.4) node [anchor=north west][inner sep=0.75pt]    {$v_{k}$};
\draw (72,53.4) node [anchor=north west][inner sep=0.75pt]    {$e$};
\end{tikzpicture}
    \caption{The graph $G_k$}
    \label{fig:Gk}
\end{figure}
    
We claim that $\alpha'(G) = 5k$ and $\es(G) = 6k$. Since $\alpha'(K_5 - e) = 2$, and every matching of $G$ has at most one edge incident with $v_j$, $j\in [k]$, we infer that $\alpha'(G) \leq 2(2k) + k = 5k$. Note that a $2$-matching of each $H_i$ together with the bold edges from Fig.~\ref{fig:Gk} represent a maximum matching of size $5k$ in $G$. So, $\alpha'(G) = 5k$. Since $G$ is 4-regular, by Theorem~\ref{thm:exact es-Delta}, $\es(G) = 6k$ and hence the conclusion is reached after a direct computation.

From \san{Theorem~\ref{thm:reduction-to-regular} and} the second estimate of Corollary~\ref{cor: es for regulars}, we can deduce also the following consequence. 

\begin{corollary} \label{cor: es for odd delta}
    For each $\epsilon > 0$ and sufficiently large $n$, for every graph $G$ of order $n$ with odd maximum degree $k$, we have
    \[
        \es(G) \leq \lp \frac{k^3 + k^2  -6k + 2}{2(k^3 - 3k)} + \epsilon \rp n.
    \]
\end{corollary}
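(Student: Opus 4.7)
Set
\[
c = \frac{k^3+k^2-6k+2}{2(k^3-3k)}, \qquad d = \frac{k-1}{k^3-3k},
\]
so that the second estimate in Corollary~\ref{cor: es for regulars} reads $\es(H) \le cm + d$ for every connected $k$-regular graph $H$ of order $m$. The basic idea is to absorb the additive constant $d$, which depends only on $k$, into the permitted slack $\epsilon n$ for sufficiently large $n$.

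The plan is to apply Theorem~\ref{thm:reduction-to-regular} with $c_\Delta = c + \epsilon$, reducing the task to verifying $\es(H) \le (c+\epsilon)|V(H)|$ for every $k$-regular graph $H$. For a connected $k$-regular $H$ on $m \ge d/\epsilon$ vertices, Corollary~\ref{cor: es for regulars} gives $\es(H) \le cm + d \le (c+\epsilon)m$ directly. For a disconnected $k$-regular $H$ with components $H_1,\dots,H_r$ of orders $m_1,\dots,m_r$, summing the connected bound yields
\[
\es(H) \le \sum_{i=1}^r (cm_i + d) = c|V(H)| + rd\,.
\]
In particular, if $G$ itself is connected then so is $R(G)$ (so $r=1$), and Lemma~\ref{lem: regularization lemma} gives $\es(G) \le cn + d \le (c+\epsilon)n$ once $n \ge d/\epsilon$.

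The main obstacle is the disconnected case, where $R(G)$ may have many small components and the term $rd$ need not be at most $\epsilon|V(R(G))|$ a priori. I would split the components of $R(G)$ at a threshold of order $d/\epsilon$: each component of order at least $d/\epsilon$ satisfies the target bound individually, and since the threshold depends only on $k$ and $\epsilon$, only finitely many isomorphism types of smaller connected $k$-regular graph can arise. A short case-by-case verification then provides the sharper estimate $\es(H_i) \le c|V(H_i)|$ for each of these small types; for example, the inequality $c > \tfrac12$ valid for odd $k \ge 3$ gives $\es(K_{k+1}) = (k+1)/2 < c(k+1)$, and analogous structural considerations (such as the existence of near-perfect matchings in the remaining small regular graphs) settle the other cases. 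Summing the two regimes closes the argument and yields $\es(G) \le (c+\epsilon) n$ for $n$ sufficiently large.
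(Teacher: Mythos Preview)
Your strategy coincides with the route the paper signals (it merely states that the corollary follows from Theorem~\ref{thm:reduction-to-regular} together with the odd-$k$ case of Corollary~\ref{cor: es for regulars}), and your treatment of connected $G$ is correct: if $G$ is connected then so is $R(G)$, whence Lemma~\ref{lem: regularization lemma} and Corollary~\ref{cor: es for regulars} give $\es(G)\le cn + dn/|V(R(G))| \le cn + d \le (c+\epsilon)n$ once $n \ge d/\epsilon$.

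The genuine gap is in the disconnected case, where you assert that a ``short case-by-case verification'' yields $\es(H_i)\le c\,|V(H_i)|$ for every small connected $k$-regular component $H_i$. This fails exactly on the graphs that make Corollary~\ref{cor: es for regulars} tight. Concretely, for $k=3$ take three copies of $K_4$ with one edge subdivided and join the three subdivision vertices to a new central vertex; the resulting connected cubic graph $H_0$ has $|V(H_0)|=16$, $\alpha'(H_0)=7$, hence $\es(H_0)=9 = c\cdot 16 + d > c\cdot 16$. No ``near-perfect matching'' consideration rescues this. Your threshold-splitting then collapses: the disjoint union of $t$ copies of $H_0$ is $3$-regular (hence equals its own regularization) of order $n=16t$ with $\es/n = 9/16 = c + d/16$ for every $t$, so for any $\epsilon < d/16$ the hypothesis of Theorem~\ref{thm:reduction-to-regular} with $c_\Delta = c+\epsilon$ cannot be met, regardless of how large $n$ is. Thus the step you flag as the ``main obstacle'' is not closed by the argument you sketch; you would need either an additional hypothesis such as connectivity of $G$, or a genuinely different mechanism for the disconnected case.
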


We know the following  function is monotone
\[
    f(k) = \frac{k^3 + k^2  -6k + 2}{2(k^3 - 3k)}\,.
\]
Moreover,
\[
    \lim_{k \rightarrow \infty} f(k) = \frac{1}{2}\,,
\]
By Corollary~\ref{cor: es for even regulars at least 4} \san{for any even $k > 2$} we have
\[
    \es(G) \leq \frac{n+1}{2}.
\]
Combining Corollaries~\ref{cor: es for even regulars at least 4} and~\ref{cor: es for odd delta} with the above discussion, and having Theorem~\ref{thm:reduction-to-regular} in mind, we get the following result by setting $k=3$.

\begin{corollary}
    For each $\epsilon > 0$ and sufficiently large $n$, for every connected graph $G$ of order $n$, we have
    \[
        \es(G) \leq \lp \frac{5}{9} + \epsilon \rp n.
    \]
\end{corollary}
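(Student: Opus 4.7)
The plan is a case analysis on $\Delta := \Delta(G)$, invoking in each case the sharp bound already established and identifying $\Delta = 3$ as the unique worst case, which produces the coefficient $5/9$.

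Fix $\epsilon > 0$ and choose a threshold $n_0$ to be specified. Since $G$ is connected, for $n \ge 3$ we have $\Delta \ge 2$, so three subcases cover all possibilities.

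If $\Delta = 2$, then the ``$(n+1)/2$ branch'' of Corollary~\ref{cor: es for regulars} together with Theorem~\ref{thm:reduction-to-regular} yields $\es(G) \le (n+1)/2$, which is at most $(5/9 + \epsilon)n$ once $n$ is large enough. If $\Delta \ge 4$ is even, Corollary~\ref{cor: es for even regulars at least 4} combined with Theorem~\ref{thm:reduction-to-regular} gives $\es(G) \le g(\Delta)\,n$, where
\[
g(\Delta) \;=\; 1-\frac{\Delta^{2}+4}{2(\Delta^{2}+\Delta+2)} \;=\; \frac{\Delta(\Delta+2)}{2(\Delta^{2}+\Delta+2)}.
\]
A short calculation reduces $g(\Delta) \le 5/9$ to $\Delta^{2}-8\Delta+20 \ge 0$, which holds for every real $\Delta$ (discriminant $64-80<0$); hence $\es(G) \le (5/9)\,n$ in this range. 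Finally, if $\Delta \ge 3$ is odd, Corollary~\ref{cor: es for odd delta} gives $\es(G) \le (f(\Delta)+\epsilon)\,n$ for $n$ sufficiently large, and the monotonicity of $f$ together with $f(3) = 5/9$ (noted just before the statement) yields $f(\Delta) \le 5/9$, so $\es(G) \le (5/9+\epsilon)\,n$.

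Combining the three subcases gives the desired bound. The only delicate point, which I expect to be the main (though minor) obstacle, is that the phrase ``sufficiently large $n$'' implicit in Corollary~\ref{cor: es for odd delta} must be chosen uniformly in $\Delta$. This is immediate: the discrepancy between the exact Henning--Yeo ratio and $f(\Delta)$ is of the form $(\Delta-1)/((\Delta^{3}-3\Delta)\,n)$, whose prefactor $(\Delta-1)/(\Delta^{3}-3\Delta)$ attains its maximum $1/9$ at $\Delta = 3$ and tends to $0$ as $\Delta \to \infty$. Hence a single threshold $n \ge 1/(9\epsilon)$ suffices across all odd $\Delta \ge 3$, so the three bounds can be merged into the single statement $\es(G) \le (5/9+\epsilon)\,n$ for $n \ge n_0(\epsilon)$.
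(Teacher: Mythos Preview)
Your argument is correct and follows the same route as the paper: a case split on $\Delta$, invoking Corollaries~\ref{cor: es for even regulars at least 4} and~\ref{cor: es for odd delta} together with the monotonicity of $f$ and the identification of $f(3)=5/9$ as the worst coefficient. Your treatment is in fact more careful than the paper's one-line derivation, since you explicitly verify $g(\Delta)\le 5/9$ for even $\Delta\ge 4$ and address the uniformity of the threshold $n_0$ across all odd $\Delta$.

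One small slip: in the $\Delta=2$ case you cannot invoke Theorem~\ref{thm:reduction-to-regular}, because the bound $(n+1)/2$ is not of the required form $c_\Delta\,|V(H)|$, and indeed it fails for disconnected $2$-regular graphs (a disjoint union of triangles has $\es=2n/3$). This is harmless here, though, since a \emph{connected} graph with $\Delta=2$ is a path or a cycle, for which $\es(G)\le\lceil n/2\rceil$ holds directly.
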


\section{Graphs \san{whose} $\Delta$-edge stability number \san{is} at most half the order}
\label{sec:1/2-bound}

In this \san{section,} we are interested in families of graphs for which the $\Delta$-edge stability number is bounded from the above by one half of the order. First, as a consequence of Theorem~\ref{thm:components}(ii),  this holds true for bipartite graphs. 

\begin{corollary} 
If $G$ is a bipartite graph of order $n$, then $\es(G)\leq n/2$.
\end{corollary}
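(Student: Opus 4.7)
The plan is to apply Theorem~\ref{thm:components}(ii), which already gives $\es(G) \le n - \alpha(G)$ for any graph, so the task reduces to showing $\alpha(G) \ge n/2$ whenever $G$ is bipartite.

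First I would fix a bipartition $V(G) = X \cup Y$ with $X, Y$ disjoint independent sets and $|X| + |Y| = n$. Since both $X$ and $Y$ are independent sets of $G$, we have $\alpha(G) \ge \max\{|X|, |Y|\} \ge \lceil n/2 \rceil \ge n/2$. Plugging this into Theorem~\ref{thm:components}(ii) gives
\[
\es(G) \le n - \alpha(G) \le n - \frac{n}{2} = \frac{n}{2},
\]
which is exactly the claimed bound.

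There is no real obstacle here: the corollary is a one-line consequence of Theorem~\ref{thm:components}(ii) together with the trivial observation that each part of a bipartition is independent, so the larger part certifies $\alpha(G) \ge n/2$. The only minor point worth noting is that the bound is tight (attained, for example, by regular bipartite graphs with a perfect matching, as already mentioned in the sharpness discussion of Theorem~\ref{thm:components}(ii)), so no improvement via a different route is expected.
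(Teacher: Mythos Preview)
Your proof is correct and follows exactly the approach the paper intends: the corollary is stated without proof, simply as a consequence of Theorem~\ref{thm:components}(ii), and your observation that the larger bipartition class gives $\alpha(G)\ge n/2$ is precisely the missing one-line justification.
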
 

Our next goal is to show that this bound also holds for each graph $G$ \san{such that} $G[\core(G)]$ is Class 1. To this end, let us first prove the following.   

\begin{theorem} \label{thm: coloring core}
Let $G$ be a graph with maximum degree $\Delta$. If $G[\core(G)]$ has a proper \san{$\Delta$-edge} coloring, then $G$ has a matching that saturates $\core(G)$.
\end{theorem}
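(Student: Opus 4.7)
My plan is to apply Lemma~\ref{lem: tutte lemma} with $A = \core(G)$: if the Tutte-type condition
\[
    o_{G[A]}(G - S) \leq |S|
\]
holds for every $S \subseteq V(G)$, the lemma immediately yields a matching of $G$ saturating $A = \core(G)$, which is the conclusion. I will argue by contradiction, assuming some $S \subseteq V(G)$ admits a family $\mathcal{O}$ of at least $|S| + 1$ odd components of $G - S$ each contained in $G[A]$, and derive the numerical contradiction $\Delta(|S| + 1) \leq \Delta |S|$ by double counting edges at $S$.

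The heart of the argument is to show that each $O \in \mathcal{O}$ sends at least $\Delta$ edges to $S$. Since $V(O) \subseteq A$, the subgraph $O$ is induced in $G[A]$, so the hypothesized proper $\Delta$-edge coloring of $G[A]$ restricts to a proper $\Delta$-edge coloring of $O$. Because $|V(O)|$ is odd, each color class, being a matching in $O$, saturates at most $|V(O)| - 1$ vertices. Summing across the $\Delta$ colors gives $2|E(O)| \leq \Delta(|V(O)| - 1)$, whence
\[
    \sum_{v \in V(O)} \bigl(\Delta - d_O(v)\bigr) \geq \Delta.
\]
Now every $v \in V(O)$ has $d_G(v) = \Delta$ (as $v \in \core(G)$), and all neighbors of $v$ in $V(G) - V(O)$ must lie in $S$ since $O$ is a connected component of $G - S$. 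Hence $\Delta - d_O(v) = |N_G(v) \cap S|$, and the sum above equals $e(V(O), S)$.

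Summing the inequality $e(V(O), S) \geq \Delta$ over $\mathcal{O}$ produces at least $\Delta(|S| + 1)$ edges between $\bigcup_{O \in \mathcal{O}} V(O)$ and $S$, while the crude bound $\sum_{s \in S} d_G(s) \leq \Delta|S|$ caps the number of edges incident to $S$, yielding the contradiction. The only non-routine step is recognizing that the odd parity of $|V(O)|$ together with the $\Delta$-edge coloring forces a full deficit of $\Delta$ in the degree sum inside $O$; once this is spotted, the rest is a single-line edge count. Reducing to Lemma~\ref{lem: tutte lemma} (rather than attempting a direct Hall argument between $\core(G)$ and its complement, which loses too much information) is the second key choice that makes the proof short.
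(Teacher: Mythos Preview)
Your proof is correct and follows essentially the same strategy as the paper: apply Lemma~\ref{lem: tutte lemma}, show that each odd component $O$ of $G-S$ contained in $G[A]$ satisfies $e(V(O),S)\ge\Delta$, and then double count edges at $S$ to obtain the contradiction $\Delta(|S|+1)\le\Delta|S|$. The only difference is in the verification of $e(V(O),S)\ge\Delta$: the paper extends the $\Delta$-edge coloring of $G[A]$ to all edges incident with core vertices and argues that a missing color on the $O$--$S$ edges would yield a perfect matching of the odd set $V(O)$, whereas you obtain the same inequality by the direct count $2|E(O)|\le\Delta(|V(O)|-1)$ coming from the restricted coloring.
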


\begin{proof}
Let $n = |V(G)|$. Let $A = G[\core(G)]$ and $B = V(G) - V(A)$. Suppose that $G$ has no matching that saturates $A$. By Lemma \ref{lem: tutte lemma}, there exists $S \subseteq V(G)$ such that
\[
	\san{o_A(G-S) \geq |S| + 1\,.}
\]

    Now, we claim that if $C \subseteq A$ is an odd component of $G - S$, then
    \begin{equation} \label{eq: second inequality}
        e_G(C , S) \geq \Delta.
    \end{equation}
    Consider a proper $\Delta$-edge coloring of $A$ and extend it to a \san{$\Delta$-edge} coloring of $G - E(B)$ such that for every $u\in A$, all edges \san{incident} to $u$ have $\Delta$ distinct colors.

    If there are at most $\Delta - 1$ edges with one endpoint in $V(C)$ and another in $S$, then there exists a color $t$ which has not appeared on these edges. Since every vertex in $C$ has degree $\Delta$, the color $t$ appears at each vertex of $C$. Therefore, the edges with color $t$ in $C$ form a perfect matching in $C$, a contradiction, and the claim is proved.

    By the claim, we have 
    \[
        e_G(A - S, S) \geq \lp |S| + 1 \rp \Delta.
    \]
    Now, by the Pigeonhole Principle, there exists $u \in S$ such that 
    \[
        e_G(A, \la u \ra) \geq \Delta + \san{1\,,}
    \]
    which is a contradiction.
\end{proof}

A matching that saturates every vertex of $\core(G)$ is clearly a mitigating set, hence from Theorem~\ref{thm: coloring core} we immediately get: 

\begin{corollary}
    If $G$ is a graph of order $n$ such that $G[\core(G)]$ is Class 1, then $\es(G) \leq \frac{n}{2}$. In particular, if $G$ is Class 1 graph, then $\es(G) \leq \frac{n}{2}$.
\end{corollary}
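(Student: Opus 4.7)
The plan is to apply Theorem~\ref{thm: coloring core} directly. First, I would verify that the hypothesis of that theorem is satisfied in our setting. If $G[\core(G)]$ is Class $1$, then $\chi'(G[\core(G)]) = \Delta(G[\core(G)]) \le \Delta(G)$, so any proper $\chi'(G[\core(G)])$-edge coloring of $G[\core(G)]$ may be viewed as a proper $\Delta(G)$-edge coloring by embedding its color palette into a $\Delta(G)$-element palette. Thus Theorem~\ref{thm: coloring core} produces a matching $M$ of $G$ that saturates $\core(G)$.

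Next, I would observe that such a matching $M$ is automatically a mitigating set. Indeed, every $v\in\core(G)$ is an endpoint of some edge of $M$, so $d_{G-M}(v) \le \Delta(G) - 1$. Vertices in $V(G) \setminus \core(G)$ already satisfy $d_G(v) \le \Delta(G) - 1$, hence certainly $d_{G-M}(v) \le \Delta(G) - 1$. Therefore $\Delta(G - M) \le \Delta(G) - 1$, so
\[
\es(G) \le |M| \le \left\lfloor \tfrac{n}{2} \right\rfloor \le \tfrac{n}{2},
\]
the middle inequality holding because $M$ is a matching in a graph of order $n$.

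For the ``in particular'' clause, I would note that if $G$ itself is Class $1$, then restricting a proper $\Delta(G)$-edge coloring of $G$ to the edge set of $G[\core(G)]$ gives a proper $\Delta(G)$-edge coloring of $G[\core(G)]$, so the hypothesis of the first assertion is met and the bound $\es(G) \le n/2$ follows immediately.

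There is no substantive obstacle here: all the difficulty is already absorbed into Theorem~\ref{thm: coloring core}. The only mild subtlety is remembering that Class $1$ is phrased in terms of $\Delta(G[\core(G)])$ rather than $\Delta(G)$, but the inequality $\Delta(G[\core(G)]) \le \Delta(G)$ bridges the two instantly.
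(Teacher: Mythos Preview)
Your proposal is correct and follows essentially the same route as the paper: apply Theorem~\ref{thm: coloring core} to obtain a matching saturating $\core(G)$, observe that such a matching is a mitigating set, and bound its size by $n/2$. Your handling of the ``in particular'' clause (restricting a proper $\Delta(G)$-edge coloring of $G$ to $G[\core(G)]$ rather than arguing that $G[\core(G)]$ is itself Class~1) is in fact more careful than the paper's one-line remark.
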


In our final result, we prove that also in graphs with large maximum \san{degree,} the $\Delta$-edge stability number is bounded from the above by \san{half} of the order. To prove the \san{result,} we state the following \san{lemma,} for which we recall that a graph is \san{said to be} {\em factor critical} if every \san{vertex-deleted} subgraph has a perfect matching. 


\begin{lemma} \label{lem: factor critical}
    If $G$ does not have a perfect matching and $S\subseteq V(G)$ is the \san{largest} set such that 
    \begin{equation} \label{ineq: tutte ineq for factor critical lemma}
                o(G - S) > |S|,
    \end{equation}
    then each component of $G - S$ is factor critical.
\end{lemma}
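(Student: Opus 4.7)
The plan is to argue by contradiction with the maximality of $|S|$. The main engine is the parity identity $o(H)\equiv |V(H)|\pmod 2$ valid for every graph $H$, which upgrades every strict inequality supplied by Tutte's theorem into one with slack at least $2$; this gain of $+2$ is exactly what will drive the contradiction.

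First I would observe that every component of $G-S$ must be odd. Indeed, if $C$ were an even component and $v\in V(C)$ were any vertex, then $S'=S\cup\{v\}$ would satisfy $|S'|=|S|+1$, while $|V(C-v)|$ is odd, so $C-v$ contains at least one odd component. This yields
\[ o(G-S') \ge o(G-S) + 1 \ge |S| + 2 > |S'|, \]
contradicting the maximality of $|S|$. So every component of $G-S$ is odd.

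Next, suppose for contradiction that some component $C$ of $G-S$ is not factor critical. Then there is a vertex $v\in V(C)$ for which $C-v$ has no perfect matching, and Tutte's theorem applied inside $C-v$ produces $T\subseteq V(C)\setminus\{v\}$ with $o((C-v)-T) > |T|$. Since $|V(C)|$ is odd, $|V(C)|-1-|T|$ has the same parity as $|T|$, so the parity identity sharpens the inequality to $o((C-v)-T)\ge |T|+2$. I would then form $S'=S\cup\{v\}\cup T$, which is a disjoint union since $T\subseteq V(C)\subseteq V(G)\setminus S$, giving $|S'|=|S|+|T|+1$. The components of $G-S'$ are precisely the components of $G-S$ other than $C$ together with those of $(C-v)-T$, whence
\[ o(G-S') = o(G-S) - 1 + o((C-v)-T) \ge |S| + |T| + 2 = |S'|+1 > |S'|, \]
again contradicting the maximality of $S$.

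The one delicate point is the parity upgrade $o((C-v)-T)\ge |T|+2$: without this extra $+1$ drawn from parity, the final display would only yield $o(G-S')\ge |S'|$, which is consistent with the maximality of $S$ and produces no contradiction. Beyond this, the argument is bookkeeping of set sizes and components, so I expect no serious obstacle. This is in effect the local step underlying the Gallai--Edmonds decomposition.
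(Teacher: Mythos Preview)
Your proof is correct and follows essentially the same route as the paper's own argument: first rule out even components by adjoining a single vertex to $S$, then treat a non--factor-critical odd component by applying Tutte inside $C-v$ and adjoining $\{v\}\cup T$ to $S$. The paper's proof in fact asserts the existence of $S'\subseteq V(C)\setminus\{v\}$ with $o((C-v)-S')>|S'|+1$ without explanation, whereas you justify this via the parity identity; your write-up is thus slightly more complete but otherwise identical in spirit.
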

\begin{proof}
     Suppose that $C$ is a component of $G - S$ of even order and $v \in C$. Note \san{that} $C - \{v\}$ has at least one odd component. Therefore, $S \cup \{v\}$ satisfies (\ref{ineq: tutte ineq for factor critical lemma}), a contradiction. So, $G - S$ has no even component. Suppose $C$ is an odd component of $G - S$ which is not factor critical. \san{Thus, for some $v \in V(C)$, $C - \{v\}$ does} not have a perfect matching. Hence, there is  $S' \subseteq \san{V(C)} - \{v\}$ such that $\san{o\left( (C -\{v\}) - S' \right)} > |S'| + 1$. Thus, $S \cup S' \cup \{v\}$ satisfies (\ref{ineq: tutte ineq for factor critical lemma}), a contradiction.
\end{proof}

Our final result now reads as follows.

\begin{theorem} \label{thm: dense graph}
Let $G$ be a connected graph of order $n$ with maximum degree $\Delta$. If $\Delta \geq\frac{n-2}{3}$, then $es_{\Delta}(G)\leq\lceil\frac{n}{2}\rceil$.
\end{theorem}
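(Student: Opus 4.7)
Write $C=\core(G)$, $B=V(G)\setminus C$, and $\epsilon=[n\text{ odd}]\in\{0,1\}$. By Theorem~\ref{thm:exact es-Delta} the desired inequality $\es(G)\leq\lceil n/2\rceil$ is equivalent to $\alpha'(G[C])\geq|C|-\lceil n/2\rceil$. The case $|C|\leq\lceil n/2\rceil$ is immediate and I would dispose of it at the start, so I may assume $|C|>\lceil n/2\rceil$. Applying the Tutte--Berge formula to $G[C]$ then recasts the goal as the following statement, to be proved for every $S\subseteq C$:
\[
    o(G[C]-S)\leq|S|+|B|+\epsilon\,.
\]

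To verify this for a fixed $S\subseteq C$, let $O_1,\ldots,O_r$ be the odd components of $G[C]-S$. The structural engine is the observation that for any $v\in V(O_i)$, since $v\in C$ has degree $\Delta$ and $O_i$ is a component of $G[C]-S$, we have $N_G(v)\subseteq V(O_i)\cup S\cup B$. Three consequences drive the estimate: (i) $|V(O_i)|\geq\Delta+1-|S|-|B|$; (ii) a singleton odd component forces $|S|+|B|\geq\Delta$, and an edge count from the singletons into $S\cup B$ yields $r_1\leq|S|+|B|$, where $r_1$ is the number of singletons; (iii) handshaking within each non-singleton $O_i$, combined with the upper bound $(|S|+|B|)\Delta$ on the total number of edges between $\bigcup_iV(O_i)$ and $S\cup B$, produces the tighter global constraint
\[
    \sum_{i\colon|V(O_i)|\leq\Delta+1}|V(O_i)|\bigl(\Delta-|V(O_i)|+1\bigr)\leq(|S|+|B|)\Delta\,.
\]

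Combining (i)--(iii) with $\sum_i|V(O_i)|\leq|C|-|S|$, the identity $|C|+|B|=n$, and the hypothesis $3\Delta\geq n-2$, I would split into the two cases $|S|+|B|\geq\Delta$ and $|S|+|B|<\Delta$ and verify $r\leq|S|+|B|+\epsilon$ by direct arithmetic; Lemma~\ref{lem: factor critical} can be invoked to assume that the witnessing $S$ is maximal, so that each $O_i$ is factor critical and the bounds above apply most tightly. The main obstacle I anticipate is the transition regime where $|S|+|B|$ is close to $\Delta$ and singleton and non-singleton odd components coexist; there the crude singleton count (ii) and the individual size bound (i) are each insufficient on their own, and it is the mixed edge double-count (iii), used together with the tight arithmetic of $3\Delta\geq n-2$, that should finally close the argument.
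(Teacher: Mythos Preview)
Your Tutte--Berge reformulation on $G[\core(G)]$ is legitimate: the target inequality $o(G[C]-S)\le|S|+|B|+\epsilon$ really is equivalent to $\es(G)\le\lceil n/2\rceil$. This is a different route from the paper, which instead applies Lemma~\ref{lem: tutte lemma} to $G$ itself (so $S\subseteq V(G)$, the odd components live in $G-S$, and edges are counted into $S$ only, not $S\cup B$), and which in the endgame abandons the deficiency viewpoint altogether and writes down an explicit mitigating set $M\cup N\cup\{su,sv\}$ built from two factor-critical pieces.

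There is, however, a genuine gap in your outline: none of (i)--(iii) uses the connectedness of $G$, and connectedness is indispensable. For two disjoint copies of $K_t$ with $t$ odd one has $\Delta=t-1\ge(n-2)/3$ but $\es=t+1>\lceil n/2\rceil$; your target inequality already fails at $S=\varnothing$ with $r=2>|B|+\epsilon=0$. In the connected setting the same configuration---$|S|+|B|$ small and every odd component of size $\ge\Delta+1$---is precisely what your three tools cannot reach: the handshaking bound (iii) is vacuous because each summand $|V(O_i)|(\Delta-|V(O_i)|+1)$ is nonpositive, (ii) concerns singletons only, and (i) merely restates that the components are large. Nothing listed prevents two such components. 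The paper meets this case head-on: with one component of size $>\Delta$ the edge count forces every vertex of $S$ to send all its edges into the other components, so the large component would itself be a component of $G$, contradicting connectedness (Case~1); with two such components one deduces $|S|=1$ and then constructs the mitigating set directly via factor-criticality (Case~2). Your promised ``direct arithmetic'' cannot close these cases from (i)--(iii) alone; you would at minimum need to insert an explicit connectedness step (large components with no edges to $S\cup B$ are components of $G$) together with the parity observation $r\equiv|S|+|B|+\epsilon\pmod 2$, neither of which your outline mentions. A minor point: Lemma~\ref{lem: factor critical} is stated for $G$, whereas you need its analogue for $G[C]$ (the Gallai--Edmonds decomposition of $G[C]$).
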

\begin{proof}
Let \san{$A=V(\core(G))$} and $B=V(G)\backslash A$. If there exists a matching in $G$ which saturates $A$, then we get the result. Now, assume $G$ does not contain such matchings. So, by Lemma \ref{lem: tutte lemma}, there exists $S\subseteq V(G)$ such that $\oa(G - S)\geq|S| + 1$. Thus, there exist at least $|S|+1$ odd components in $G -S$ that are contained \san{in $G[A]$.} We name them as $C_{1}, \ldots, C_{|S|+1}$.  \san{Let $\mathcal{C} = \{C_1, \dots, C_{|S|+1}\}$.}

We claim that if \san{$C \in \mathcal{C}$} and $e\left(V(C), S\right) < \Delta$, then \san{$|V(C)| \geq \Delta + 1$, and} moreover, there exists $v \in V(C)$ such that $N[v] \subseteq V(C)$.
First, suppose that \san{$|V(C)| \geq \Delta$}.  If every vertex of $C$ has a neighbor in $S$, then it is clear that $e(V(C),S)\geq\Delta$, a contradiction. So, there exists a vertex $v\in C$ such that $N[v]\subseteq V(C)$. Thus, $|V(C)|\geq\Delta+1$, as desired. \san{Suppose $|V(C)| < \Delta$. Since} every vertex of $C$ \san{has} degree $\Delta$, $e(\{u\},S)\geq\Delta-|V(C)|+1$ \san{for each $u \in V(C)$.} Hence $e(V(C),S)\geq |V(C)|(\Delta-|V(C)|+1)\geq\Delta$, \san{a} contradiction. So the claim is proved.

\san{Let $L = \bigcup_{i = 1}^{|S|+1} V(C_i)$.}
Note that if $e(V(C),S)\geq\Delta$ for every $C\in \san{\mathcal{C}}$, then $e(\san{L}, S)\geq\Delta(|S|+1)$ and by the pigeonhole \san{principle}, there exists $w\in S$ such that $d(w)\geq\Delta+1$, a contradiction.

Now, since $\Delta\geq\frac{n-2}{3}$, if \san{$C \in \mathcal{C}$} and $e(V(C),S) < \Delta$, then by the \san{claim,} we have $|V(C)|\geq\frac{n+1}{3}$. Hence, we have at most two $C_{i}$ with  $e(V(C_i), S) < \Delta$, say $C_1$ and $C_2$. \san{Consider} two cases. 

\medskip\noindent
{\bf Case 1}. $|V(C_{1})|>\Delta$ and for every $i>1$, $|V(C_{i})|\leq\Delta$. \\
So, by the claim, $e(L - V(C_1), S)\geq\Delta|S|$. On the other \san{hand, since $d(s)\leq\Delta$ for every $s\in S$,} we have $e(L - V(C_1), S)\leq\Delta|S|$. Hence,  $e(L - V(C_1), S)=\Delta|S|$\san{, meaning} that all neighbors of each vertex of $S$ \san{are in $L - V(C_1)$.} So, $C_{1}$ is a component of \san{$G$, but this contradicts the assumption that $G$ is connected.}

\medskip\noindent
{\bf Case 2}. $|V(C_{1})| >\Delta$, $|V(C_{2})|>\Delta$, and for every $i>2$, $|V(C_{i})| \leq \Delta$. \\
Since, $|V(C_{1})|\geq\frac{n+1}{3}$ and $|V(C_{2})|\geq\frac{n+1}{3}$, so $| V(G) - \left(V(C_1) \cup V(C_2)\right)|<\frac{n-1}{3}$. If there exists a vertex $u \in \san{G - S}$ of degree $\Delta$, \san{then} since $N[u]\subseteq V(G) - \left(V(C_1) \cup V(C_2)\right)$ and $|N[u]|>\frac{n}{3}$, we get a contradiction. So, $L -\left(V(C_1) \cup V(C_2)\right) = \varnothing$ and thus, $|S|=1$. \san{ Let $s$ be the member of $S$.}

Note that if \san{$S$ is chosen to be of} maximum size, \san{then} by Lemma~\ref{lem: factor critical}\san{, all members of $\mathcal{C}$}  are factor critical. We choose two arbitrary edges $su$ and $sv$ such that $u\in V(C_{1})$ and $v\in V(C_{2})$. As $C_{1}$ and $C_{2}$ are factor critical, there exist matchings $M$ and $N$ that saturate vertices of $C_{1}\backslash\{u\}$ and $C_{2}\backslash\{v\}$, respectively. Since all vertices of degree $\Delta$ are in $L \cup S$, it is clear that $M \cup N \cup \{su, sv \}$ is a mitigating set for $G$. Note that $|M| + |N| + |\{ su, sv \}| = \frac{|L| + 2}{2} \leq \frac{n+1}{2}$, thus $\es(G) \leq \frac{n+1}{2}$ and we are done.
\end{proof}

\begin{remark}
\normalfont
To see that the bound  $\Delta \geq\frac{n-2}{3}$ of Theorem~\ref{thm: dense graph} cannot be lowered in general, consider the following example. For each odd $t\ge 7$, let $G_t$ be the graph constructed as follows. Take three \san{vertex-disjoint} copies of $K_{t}$, remove one edge from each of them, add a vertex $u$, and join $u$ to the endpoints of the deleted \san{edges;} see Fig.~\ref{fig: tight example}. 

\begin{figure}[ht!] 
    \centering

\tikzset{every picture/.style={line width=0.75pt}} 

\begin{tikzpicture}[x=0.75pt,y=0.75pt,yscale=-1,xscale=1]

\draw   (60,115) .. controls (60,101.19) and (71.19,90) .. (85,90) .. controls (98.81,90) and (110,101.19) .. (110,115) .. controls (110,128.81) and (98.81,140) .. (85,140) .. controls (71.19,140) and (60,128.81) .. (60,115) -- cycle ;
\draw  [line width=3.75] [line join = round][line cap = round] (86,90.6) .. controls (86,90.6) and (86,90.6) .. (86,90.6) ;
\draw  [line width=3.75] [line join = round][line cap = round] (86,139.6) .. controls (86,139.6) and (86,139.6) .. (86,139.6) ;
\draw  [dash pattern={on 4.5pt off 4.5pt}]  (85,90) -- (85,140) ;
\draw   (128,115) .. controls (128,101.19) and (139.19,90) .. (153,90) .. controls (166.81,90) and (178,101.19) .. (178,115) .. controls (178,128.81) and (166.81,140) .. (153,140) .. controls (139.19,140) and (128,128.81) .. (128,115) -- cycle ;
\draw  [line width=3.75] [line join = round][line cap = round] (154,90.6) .. controls (154,90.6) and (154,90.6) .. (154,90.6) ;
\draw  [line width=3.75] [line join = round][line cap = round] (154,139.6) .. controls (154,139.6) and (154,139.6) .. (154,139.6) ;
\draw  [dash pattern={on 4.5pt off 4.5pt}]  (153,90) -- (153,140) ;
\draw   (197,113) .. controls (197,99.19) and (208.19,88) .. (222,88) .. controls (235.81,88) and (247,99.19) .. (247,113) .. controls (247,126.81) and (235.81,138) .. (222,138) .. controls (208.19,138) and (197,126.81) .. (197,113) -- cycle ;
\draw  [line width=3.75] [line join = round][line cap = round] (223,88.6) .. controls (223,88.6) and (223,88.6) .. (223,88.6) ;
\draw  [line width=3.75] [line join = round][line cap = round] (223,137.6) .. controls (223,137.6) and (223,137.6) .. (223,137.6) ;
\draw  [dash pattern={on 4.5pt off 4.5pt}]  (222,88) -- (222,138) ;
\draw  [line width=3.75] [line join = round][line cap = round] (290,111.6) .. controls (290,111.6) and (290,111.6) .. (290,111.6) ;
\draw    (222,138) .. controls (256,137.6) and (271,135) .. (289,111.6) ;
\draw    (222,88) .. controls (263,86.6) and (271,89.6) .. (289,111.6) ;
\draw    (153,90) .. controls (190,58.6) and (267,55.6) .. (289,111.6) ;
\draw    (153,140) .. controls (193,161.6) and (275,176) .. (289,111.6) ;
\draw    (85,140) .. controls (132,186) and (282,199) .. (289,111.6) ;
\draw    (85,90) .. controls (122,47.6) and (273,22.6) .. (289,111.6) ;

\draw (72,104.4) node [anchor=north west][inner sep=0.75pt]    {$e$};
\draw (53,74.4) node [anchor=north west][inner sep=0.75pt]    {$K_{t}$};
\draw (140,104.4) node [anchor=north west][inner sep=0.75pt]    {$e$};
\draw (121,74.4) node [anchor=north west][inner sep=0.75pt]    {$K_{t}$};
\draw (209,102.4) node [anchor=north west][inner sep=0.75pt]    {$e$};
\draw (190,72.4) node [anchor=north west][inner sep=0.75pt]    {$K_{t}$};
\draw (298,100.4) node [anchor=north west][inner sep=0.75pt]    {$u$};

\end{tikzpicture}

\caption{The graph $G_t$}
\label{fig: tight example}
\end{figure}
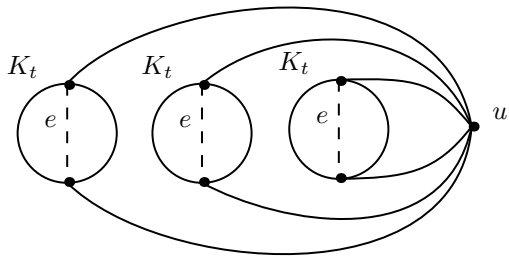

In $G_t$ we need at least $\frac{t+1}{2}$ edges to saturate the vertices of each $K_t$. Hence, $\es(G)\geq\frac{3(t+1)}{2}$. Setting $n = |V(G_t)|$, we have $n = 3t+1$. As $t\ge 7$ we have $\Delta(G_t)   = \frac{n-4}{3}$. In summary, 
$$\Delta(G_t) = \frac{n-4}{3} < \frac{n-2}{3}\quad {\rm and}\quad \es(G_t) \geq \frac{n}{2}+1\,,$$ 
hence the assumption on the maximum degree in Theorem~\ref{thm: dense graph} is tight. 
\end{remark}

As in a graph $G$ we have $\Delta(G) \ge \frac{2|E(G)|}{|V(G)|}$, Theorem~\ref{thm: dense graph} yields: 

\begin{corollary}
If $G$ is a graph of order $n$ and size at least $\frac{n(n-2)}{6}$, then $es_{\Delta}(G)\leq\lceil\frac{n}{2}\rceil$.
\end{corollary}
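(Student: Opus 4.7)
My plan is to use the elementary bound $\Delta(G) \ge 2|E(G)|/|V(G)|$ (the maximum degree is at least the average degree) to convert the size hypothesis of the corollary into the maximum-degree hypothesis of Theorem~\ref{thm: dense graph}. In other words, the corollary is essentially a repackaging of that theorem via a one-line degree estimate, and no genuinely new content needs to be developed.

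Concretely, the hypothesis $|E(G)| \ge n(n-2)/6$ immediately yields
\[
\Delta(G) \ge \frac{2|E(G)|}{n} \ge \frac{2}{n}\cdot\frac{n(n-2)}{6} = \frac{n-2}{3},
\]
which is precisely the maximum-degree condition required by Theorem~\ref{thm: dense graph}. Invoking that theorem then gives $\es(G) \le \lceil n/2 \rceil$, finishing the argument.

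I do not expect any obstacle here, since the proof is effectively a single substitution with Theorem~\ref{thm: dense graph} doing all the nontrivial work. The only mild point worth flagging is that Theorem~\ref{thm: dense graph} is stated under a connectedness assumption, while the corollary states only a density condition; under the natural reading that $G$ is connected (or, failing that, by applying Theorem~\ref{thm: dense graph} separately to each connected component $C$ with $\Delta(C)=\Delta(G)$, noting that $\Delta(C) \ge (n-2)/3 \ge (|V(C)|-2)/3$ so that the hypothesis transfers), the reduction goes through.
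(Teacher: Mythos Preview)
Your proposal is correct and takes exactly the same approach as the paper: the paper's entire proof is the single sentence ``As in a graph $G$ we have $\Delta(G) \ge \frac{2|E(G)|}{|V(G)|}$, Theorem~\ref{thm: dense graph} yields [the corollary].'' You are in fact more careful than the paper, since you flag the connectedness hypothesis of Theorem~\ref{thm: dense graph} that the corollary's statement omits.
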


\section*{Acknowledgements}

We would like to thank one of the reviewers for a very careful reading of the paper and a lot of specific, useful advice on how to improve the presentation.

This research was in part supported by a grant from IPM (No.\ 1402050012). Sandi Klav\v zar was supported by the Slovenian Research and Innovation Agency ARIS (research core funding P1-0297 and projects N1-0285, N1-0355). 

\section*{Declaration of interests}
 
The authors declare that they have no conflict of interest. 

\section*{Data availability}
 
Our manuscript has no associated data.

\end{document}